\documentclass{amsart}
\usepackage{amsmath,amssymb,amsthm,latexsym}
\usepackage[all]{xy} 

\newcommand{\R}{\mathbb{R}}
\newcommand{\Rn}{\R^n}
\newcommand{\ttw}{\mathbb{T}^2}
\newcommand{\tn}{\mathbb{T}^n}
\newcommand{\kt}{K^2}
\newcommand{\aftw}{\mathrm{Aff}_{\Z}(\mathbb{R}^2)}
\newcommand{\gltw}{\mathrm{GL}(2,\Z)}
\newcommand{\coho}{\mathrm{H}^2(\kt;\Z^2_{\rho})}
\newcommand{\cohoo}{\mathrm{H}^2(\kt;\Z^2_{\rho_1})}
\newcommand{\cohot}{\mathrm{H}^2(\kt;\Z^2_{\rho_2})}
\newcommand{\cohon}{\mathrm{H}^2(\kt;\Z^2_{\rho_3})}
\newcommand{\cohom}{\mathrm{H}^2(\kt;\Z^2_{\rho_4})}
\newcommand{\cohoi}{\mathrm{H}^2(\kt;\Z^2_{\rho_i})}
\newcommand{\Z}{\mathbb{Z}}
\newcommand{\cotan}{\mathrm{T}^*}
\newcommand{\de}{\mathrm{d}}
\newcommand{\fib}{\xymatrix@1{ F \;\ar@{^{(}->}^-{\iota}[r] & M
    \ar[r]^-{\pi} & B}}
\newcommand{\fibpl}{\xymatrix@1{ \mathrm{H}_1(F,\Z) \;\ar@{^{(}->}[r] & \mathcal{P}
    \ar[r] & B}}
\newcommand{\fibrpl}{\xymatrix@1{ \mathrm{H}_1(F,\Z) \;\ar@{^{(}->}[r] & \mathcal{P}'
    \ar[r] & B}}
\newcommand{\affr}{\mathrm{Aff}_{\Z}(\Rn)}
\newcommand{\gln}{\mathrm{GL}(n,\Z)}
\newcommand{\zn}{\Z^n}

\theoremstyle{definition}
\newtheorem{defn}{Definition}[section]
\newtheorem{exm}{Example}[section]

\newtheorem{rk}{Remark}[section]

\theoremstyle{plain}
\newtheorem{thm}{Theorem}[section]
\newtheorem{lemma}{Lemma}[section]
\newtheorem{cor}{Corollary}[section]
\title{Classification of Lagrangian fibrations over a Klein Bottle}
\author{Daniele Sepe}
\address{School of Mathematics and Maxwell Institute for Mathematical
  Sciences, The University of Edinburgh, James 
  Clerk Maxwell Building, King's Buildings, Edinburgh, EH9 3JZ, UK}
\email{d.sepe@sms.ed.ac.uk}
\thanks{I would like to thank Ivan Kozlov for his insightful comments
  on an earlier draft of this paper}
\begin{document}
\begin{abstract}
  This paper completes the classification of regular Lagrangian
  fibrations over compact surfaces. \cite{misha} classifies regular Lagrangian fibrations over $ \ttw$. The main theorem in
  \cite{hirsch} is used to in order to classify
  integral affine structures on the Klein bottle $\kt$ and, hence, regular
  Lagrangian fibrations over this space.
\end{abstract}
\maketitle
\setcounter{tocdepth}{1}
\tableofcontents

\section{Introduction}\label{sec:introduction}
This paper classifies regular Lagrangian fibrations with compact
fibre whose base space is a Klein bottle $\kt$. 

\begin{defn}
A fibration $\fib$ is \emph{Lagrangian} if the total space $M$ is a symplectic manifold and
each fibre $F$ is a Lagrangian submanifold.

\end{defn}

\noindent
The
most elementary examples of Lagrangian fibrations arise from Hamiltonian dynamics, where the
natural setting is the cotangent bundle $\cotan B$ of a manifold $B$
with canonical symplectic form $\omega_0$. The
bundle 
$
\xymatrix@1{\Rn \; \ar@{^{(}->}[r] & (\cotan B, \omega_0) \; \ar[r] & B}$
is a Lagrangian fibration. \cite{misha} classifies regular Lagrangian fibrations with fibre $\R^n$ subject to
the constraint that the fibre be \emph{complete}.  More examples come
from completely integrable Hamiltonian systems. 

\begin{defn}
A \emph{completely integrable Hamiltonian system} on a symplectic
manifold $(M^{2n},\omega)$ is a set of $n$ functions $f_1, \ldots, f_n : M \to
\R$ satisfying the following conditions

\begin{itemize}
\item The functions are in involution - i.e. $\{f_i,f_j\}=0$ for all
  $i,j$, where $\{.,.\}$ is the Poisson bracket induced by the
  symplectic form $\omega$;
\item $\de f_1 \wedge \ldots \wedge \de f_n \neq 0$ almost everywhere.
\end{itemize}
\end{defn}

\noindent
The local properties of a completely integrable Hamiltonian system
near a regular fibre of the map $\mathbf{f}=(f_1,\ldots,f_n) : M \to
\Rn$ are well-known, as they are completely described by the
Liouville-Arnol'd-Mineur theorem (see \cite{arnold} for a proof).

\begin{thm}[Liouville-Arnold-Mineur] \label{thm:la}
  Let $\mathbf{f}=(f_1,\ldots,f_n): M \to \Rn$ be a completely integrable
  Hamiltonian system with $n$ degrees of freedom and let $x \in \Rn$
  be a regular value in the image of $\mathbf{f}$. Suppose that $\mathbf{f}^{-1}(x)$ has
  a compact, connected component, denoted by $F_x$. Then
  \begin{itemize}
  \item $F_x$ is a Lagrangian submanifold of $M$ and is diffeomorphic
    to $\tn$;
  \item there is a neighbourhood $V$ of $F_x$ in $M$ that is
    symplectomorphic to an open neighbourhood $W$ of $\tn$ in $\cotan
    \tn$, as shown in the diagram below.
    \begin{displaymath}
      \xymatrix{M \ar@{<-^{)}}[r] \ar@{<-^{)}}[dr] &\; V \ar[r]^-{\tilde{\varphi}} \ar[d]
        & W\; \ar[d] \ar@{^{(}->}[r] & \mathrm{T}^*\tn  \\
        & \; F_x \ar[r]^-{\varphi} & \tn \;\ar@{^{(}->}[ur] &}
    \end{displaymath}
  \end{itemize}
  
  \noindent
  Clearly, $W \cong D^n \times \tn$. Let $\tilde{\varphi}(p)=
  (a^1(p),\ldots,a^n(p),\alpha^1(p), \ldots, \alpha^n(p))$.
  The coordinates $a^i$ are called the \textbf{actions} and depend
  smoothly on the functions $f_i$. The coordinates $\alpha^i$ are called
  the \textbf{angles}.
\end{thm}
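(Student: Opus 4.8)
The plan is to prove the three assertions in turn, building from the dynamics of the commuting flows up to the symplectic normal form. First I would observe that involutivity gives $\omega(X_{f_i}, X_{f_j}) = \{f_i,f_j\} = 0$ and $[X_{f_i}, X_{f_j}] = -X_{\{f_i,f_j\}} = 0$, so the Hamiltonian vector fields $X_{f_1}, \ldots, X_{f_n}$ commute and are everywhere tangent to the fibres of $\mathbf{f}$. Since $x$ is a regular value, the differentials $\de f_i$ are linearly independent along $F_x$, hence so are the $X_{f_i}$; as $\dim F_x = n$ they span $\mathrm{T}F_x$. Because $\omega$ annihilates $\mathrm{T}F_x$ and $\dim F_x = \tfrac12 \dim M$, the fibre $F_x$ is Lagrangian. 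Compactness of $F_x$ makes the flows complete, so the commuting fields integrate to a locally free, transitive $\Rn$-action on $F_x$; the stabiliser is a discrete subgroup (a lattice) $\Gamma_x \subset \Rn$, and the compact quotient $\Rn / \Gamma_x$ is a torus, whence $F_x \cong \tn$. This is the classical Liouville--Arnol'd argument.

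Next I would promote this to a family over a neighbourhood of $x$. Shrinking to a saturated neighbourhood $V$, the map $\mathbf{f}$ restricts to a torus bundle over a ball $D^n \subset \Rn$, and the period lattices $\Gamma_c$ assemble into a smooth lattice bundle. Choosing a smooth frame $\gamma_1(c), \ldots, \gamma_n(c)$ of $\mathrm{H}_1(F_c;\Z)$, I would define the \emph{action coordinates} by the period integrals
\begin{displaymath}
  a^i(c) = \frac{1}{2\pi} \oint_{\gamma_i(c)} \lambda,
\end{displaymath}
where $\lambda$ is a primitive of $\omega$ on $V$, which exists after shrinking. As the fibres are Lagrangian these integrals depend only on the homology classes, so the $a^i$ descend to smooth functions of the $f_j$. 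The crucial computation is that, in the action coordinates, the period lattice is rectified to the standard lattice $(2\pi\Z)^n$; equivalently, each Hamiltonian vector field $X_{a^i}$ generates a $2\pi$-periodic flow. This simultaneously shows that $(a^1, \ldots, a^n)$ is again an integrable system with linearly independent differentials.

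With periodic flows in hand, I would construct the \emph{angle coordinates}. Picking a Lagrangian section $\sigma : D^n \to V$ transverse to the fibres, I would define $\alpha^i \in \R/2\pi\Z$ as the times of the joint flow of $X_{a^1}, \ldots, X_{a^n}$ needed to reach a given point starting from $\sigma$. The commuting, $2\pi$-periodic flows make $(a^1, \ldots, a^n, \alpha^1, \ldots, \alpha^n)$ a diffeomorphism onto an open set $W \cong D^n \times \tn$ in $\cotan \tn$. The final step is to verify that in these coordinates $\omega = \sum_i \de a^i \wedge \de \alpha^i$, the canonical form: by construction $X_{a^i} = \partial/\partial \alpha^i$, so $\iota_{X_{a^i}}\omega = \pm\,\de a^i$ pins down the mixed terms; the $\de a^i \wedge \de a^j$ terms vanish because the $a^i$ Poisson-commute; and the $\de \alpha^i \wedge \de \alpha^j$ terms vanish because the fibres and $\sigma$ are Lagrangian.

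I expect the main obstacle to be the middle step --- establishing that the period integrals yield functions whose Hamiltonian flows are exactly $2\pi$-periodic. This rests on the identity rectifying the period lattice $\Gamma_c$ to $(2\pi\Z)^n$, which in turn requires carefully differentiating the period integrals under the smoothly varying cycles $\gamma_i(c)$ and using the Lagrangian condition to discard the boundary contributions. Constructing the Lagrangian section $\sigma$ and checking that the angle coordinates are well defined modulo $2\pi$ is the other delicate point, but it follows once the periodicity of the action flows is secured.
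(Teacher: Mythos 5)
The paper does not prove this theorem: it is quoted as a known result, with the reader referred to \cite{arnold} for the proof. Your sketch is the classical Liouville--Arnol'd argument --- commuting complete Hamiltonian flows yielding a transitive $\Rn$-action on the compact fibre with lattice stabiliser, action variables defined as period integrals of a local primitive of $\omega$ whose flows are $2\pi$-periodic, and angle variables obtained by flowing from a Lagrangian section --- which is exactly the approach of the cited proof and is correct as an outline, with the delicate points (periodicity of the action flows, existence and correction of the Lagrangian section) properly identified.
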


\noindent
In particular, in a neighbourhood of a regular value of
$\mathbf{f}$ a completely integrable Hamiltonian system is given by a
regular Lagrangian fibration. Conversely, given a regular Lagrangian
fibration it is given locally by a completely
integrable Hamiltonian system (\cite{dui}); Theorem
\ref{thm:la} therefore shows that the fibre of a regular Lagrangian
fibration with compact fibre is a torus $\tn$.\\

A natural question to ask is whether for a completely integrable Hamiltonian system the local action-angle coordinates given by Theorem \ref{thm:la} can be extended to global coordinates. In
\cite{dui} Duistermaat constructed obstructions to the existence of
such global coordinates for a regular Lagrangian fibration with
compact fibres. These obstructions are of a topological nature and are

\begin{itemize}
\item the \emph{monodromy} - obstruction for the fibration to be a
  principal $\tn$-fibration (Theorem \ref{thm:la} shows that any
  Lagrangian fibration is locally a principal $\tn$-fibration);
\item the \emph{Chern class} - obstruction to the existence of a
  global section.
\end{itemize}

\noindent
His motivation came from Cushman's work on the spherical pendulum, where monodromy naturally arises \cite{cush_sph}. In
\cite{zun_symp2} Zung introduced a \emph{symplectic} invariant,
called the \emph{Lagrangian class}, which is the obstruction to the
existence of a fibrewise symplectomorphism between a given Lagrangian
fibration and an appropriately defined reference Lagrangian fibration. His work is a natural continuation of the study of
isotropic fibrations carried out by Dazord and Delzant in
\cite{daz_delz}.

\begin{defn}
  An \emph{integral affine structure} on a manifold $B$ is a choice of
  atlas for $B$ in which all the transition functions lie in the group
  $\affr = \gln \ltimes \Rn$, where multiplication is defined as
  \begin{equation*}
    (A,x) \cdot (B,y) = (AB, x+Ay)
  \end{equation*}
\end{defn}

\noindent
The base space $B$ of a Lagrangian
fibration carries a natural integral affine structure \cite{bates_monchamp, cush_dav, dui, zun_symp2}.
This observation connects regular Lagrangian fibrations with the
theory of (integral) affine geometry, which is a very interesting and
rich field of mathematics and has been extensively studied \cite{hirsch, milnor, smillie}. There are
topological restrictions on the topology of
integral affine manifolds \cite{milnor}; this paper shows that the only compact
surfaces that admit integral affine structures are the 2-torus
$\ttw$ and the Klein bottle $\kt$. This is tantamount to saying that
these are the only compact surfaces that can be the base space of
regular Lagrangian fibrations (see Lemma \ref{lemma:easy} below). There are examples
of regular Lagrangian fibrations whose base space is a non-compact
$2$-manifold (e.g. the annulus or an open M\"obius strip).\\

\noindent
\cite{misha} classifies regular Lagrangian
fibrations whose base space is $\ttw$; this is obtained using the main result from
\cite{hirsch} and exploiting techniques developed in
\cite{sf}. This paper classifies regular Lagrangian
fibrations whose base space is a Klein bottle $\kt$, therefore completing the classification of regular Lagrangian fibrations over compact
surfaces. The classification is up to a fibrewise
symplectomorphism covering the identity on the base space; there is no
loss of generality in imposing this condition, since there exists a
fibrewise symplectomorphism 
\begin{equation*}
  \xymatrix{M \ar^-{F}[r] \ar[d] &M' \ar[d] \\
    B \ar^-{f}[r] & B}
\end{equation*}
\noindent
covering a diffeomorphism $f : B \to B$ on the base space if and only
if there is a fibrewise symplectomorphism
\begin{equation*}
  \xymatrix{ M \ar^-{\Phi}[0,2] \ar[1,1]& &f^* M' \ar[1,-1]\\
    & B}
\end{equation*}
\noindent
covering the identity on $B$. Furthermore this approach clarifies
the importance of the integral affine structure on the base space
$B$. Henceforth it is understood that by 'classification
of regular Lagrangian fibrations' we mean up to fibrewise
symplectomorphisms covering the identity on the base space unless
otherwise stated.
\\

The structure of the paper is as follows. Section \ref{sec:gener-lagr-fibr}
discusses generalities of Lagrangian fibrations and affine manifolds,
defining the invariants of the fibrations and highlighting the
interplay between the affine structure on the base manifold $B$ and
the monodromy of the fibration. The main result of this section is
that any integral affine Klein bottle is \emph{geodesically
  complete}, which is a consequence of the main theorem in \cite{hirsch}. This result is crucial to the rest of the present paper and is used in section \ref{sec:class-integr-affine} to
classify all possible integral affine structures on $\kt$ using
elementary methods. Section \ref{sec:expl-constr} constructs all possible
examples of regular Lagrangian fibrations $\xymatrix@1{F \;
  \ar@{^{(}->}[r] & M \ar[r] & \kt}$, using ideas from \cite{sf}.


\section{Generalities on Lagrangian Fibrations and Affine
  Manifolds}\label{sec:gener-lagr-fibr}
Throughout this paper a 'Lagrangian fibration' is
regular with compact fibres
whose base space $B$ is connected unless otherwise stated. This
section briefly goes through the
topological and symplectic invariants of Lagrangian fibrations and
then relates the monodromy to the integral affine structure on the base
space $B$ of a Lagrangian fibration. \cite{cush_dav,dui,misha,zun_symp2} provide more details about the construction of these invariants.

\subsection{Invariants of Lagrangian Fibrations}
Let us begin with the archetypal Lagrangian fibration with compact
fibre. Let $B$ be a manifold and let $(\cotan B, \omega_0)$ be its
cotangent bundle with canonical symplectic form $\omega_0$. Let $P \to
B $ be a discrete subbundle of maximal dimension of the cotangent
bundle such that $P \to B$ is locally spanned by closed forms. There are several names in the literature for this
discrete subbundle; \emph{period lattice} or \emph{period-lattice
  bundle} are the most common. The symplectic form
$\omega_0$ descends to a
well-defined symplectic form on the quotient $ \cotan B/ P \to B$ (which, by abuse of
notation, is denoted by $\omega_0$ too). 

\begin{defn} \label{defn:topref}
  The topological fibration $\cotan B / P \to B$ is the
  \emph{topological reference Lagrangian fibration with period-lattice
    bundle $P$}.
\end{defn}
\noindent
For a given manifold $B$ there may be many non-isomorphic
topological reference Lagrangian fibrations; the isomorphism type of
the bundle depends on the choice of period-lattice
bundle as the next example illustrates \cite{misha}. 

\begin{exm}
  \label{exm:mish}
  Let $\cotan \ttw$ be the cotangent bundle of the $2$-torus with
  canonical symplectic form. Then there exist period lattice bundles
  $P,P' \to \ttw$ such that the quotients $\cotan \ttw / P$, $\cotan
  \ttw /P'$ are diffeomorphic to $\mathbb{T}^4$ and to a Thurston-type
  manifold respectively. These two spaces are not homeomorphic since
  their first Betti numbers are distinct.
\end{exm}

Classifying integral affine structures on the Klein bottle allows to classify the distinct isomorphism types of admissible period-lattice bundles and, hence, to classify Lagrangian
fibrations over $\kt$. Fix a Lagrangian fibration $\fib$ and consider the natural map $\rho: \pi_1(B) \to \mathrm{Aut}(F) \cong \gln$ associated to the fibration.

\begin{defn} \label{defn:free_mon}
  $\rho$ is called the \emph{monodromy} of the
  Lagrangian fibration.
\end{defn}

\begin{rk}
The map $\rho$ is only defined up to conjugacy in $\gln$, since it depends upon the choice of basepoint $b \in B$. The conjugacy class of $\rho$ is called the \emph{free monodromy} of the fibration; it determines the isomorphism class of the period lattice bundle $P \to B$ associated to the fibration.
\end{rk}

Any topologically trivial Lagrangian fibration has
a global section (the image of the zero section $s_0 : B \to \cotan B$ under the
quotient is an example). Given a Lagrangian fibration $\fib$ with monodromy map $\rho
: \pi_1 (B) \to \gln$, a natural question to ask is
whether it admits a global section $ s: B \to M$. Consider the
following diagram

\begin{equation*}
  \xymatrix{ & M \ar[d]^{\pi} \\
B \ar@{.>}[ur]^{s} \ar[r]^{\text{id.}} & B}
\end{equation*}

\noindent
where $\text{id.}: B \to B$ denotes the identity map. Since the fibre $F$ is simple, obstruction theory shows that the obstruction to the existence of a global section
is an element $c \in \mathrm{H}^2 (B; \zn_{\rho})$.

\begin{defn} \label{defn:cc}
  $c$ is the \emph{Chern class} associated to the Lagrangian fibration.
\end{defn}

\noindent
The Chern class of Definition \ref{defn:cc} is just the generalisation of the Chern
class for principal $\tn$-bundles. The following lemma is proved in \cite{dui}.

\begin{lemma}
  Let $\fib$ be a Lagrangian fibration with monodromy $\rho : \pi_1(B)
  \to \gln$. This fibration is isomorphic to the topological reference
  Lagrangian fibration 
  $\cotan B / P \to B$ (where $P \to B$ is the period lattice bundle
  associated to the given fibration) if and only if the Chern class
  vanishes.
\end{lemma}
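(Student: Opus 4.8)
The plan is to prove the two implications separately, leaning on the fact already used to define $c$: the fibre $F \cong \tn$ is a $K(\zn,1)$, so $\pi_k(\tn) = 0$ for $k \geq 2$ and there are no secondary obstructions. Obstruction theory then says that a global section $s : B \to M$ exists if and only if the single primary obstruction $c \in \mathrm{H}^2(B;\zn_{\rho})$ vanishes. I would record this equivalence first, since it reduces the lemma to relating the existence of a section to the isomorphism type of the fibration.

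For the forward implication I would simply transport sections. The reference fibration $\cotan B/P \to B$ always admits the global section obtained by pushing the zero section $s_0 : B \to \cotan B$ through the quotient, so its Chern class vanishes; if $M$ were isomorphic to $\cotan B/P$, pulling this section back along the isomorphism would yield a section of $M$, forcing $c = 0$.

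The substance is in the converse, and the key structural claim I would establish is that $M \to B$ is a torsor over the reference fibration. By Theorem \ref{thm:la} there is an open cover $\{U_\alpha\}$ of $B$ and action--angle symplectomorphisms $\phi_\alpha : \pi^{-1}(U_\alpha) \to \cotan U_\alpha / P|_{U_\alpha}$; on overlaps the maps $\phi_\alpha\circ\phi_\beta^{-1}$ are fibre-preserving affine automorphisms of $\cotan(U_\alpha\cap U_\beta)/P$ whose linear parts are dictated by $\rho$ --- the very data defining $P$. Letting $E \to B$ be the flat $\Rn$-bundle with monodromy $\rho$ containing $P$ as a full lattice (so that $E/P = \cotan B/P$), the group bundle $E/P$ acts fibrewise on $M$ by translation in these charts; since translations are normal in the affine group and $A(x+g)=Ax+Ag$, this action is compatible with the transition maps and hence globally well defined and simply transitive on fibres. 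With a section $s$ in hand (from $c = 0$), I would then define $\Psi : M \to \cotan B/P$ by sending $m \in M_b$ to the unique $g \in (\cotan B/P)_b$ with $g\cdot s(b)=m$; this is the trivialisation of the torsor determined by $s$, and it is a fibrewise isomorphism covering the identity, giving $M \cong \cotan B/P$.

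The main obstacle is precisely the torsor claim in the converse: one must check that the fibrewise translation action is independent of the chosen action--angle charts, and this is exactly where the hypothesis that $P$ is the period-lattice bundle of $M$ is used, forcing the linear parts of the transitions of $M$ and of the reference to coincide so that the two fibrations differ only through the translational part. A cleaner way to package the same computation, which I would use if a chart-free argument is preferred, is the short exact sequence of sheaves $0 \to \underline P \to \underline E \to \underline{E/P} \to 0$: since $\underline E$ is soft and hence acyclic, the connecting homomorphism gives an isomorphism $\mathrm{H}^1(B;\underline{E/P}) \xrightarrow{\sim} \mathrm{H}^2(B;\zn_{\rho})$ under which the class of the torsor $M$ corresponds to $c$, so that $M$ is isomorphic to the reference fibration exactly when $c = 0$.
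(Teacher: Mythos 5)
Your proof is correct, but note that the paper itself does not prove this lemma --- it is stated with a citation to Duistermaat's \emph{On global action-angle coordinates}, and your argument (primary obstruction in $\mathrm{H}^2(B;\zn_{\rho})$ as the only obstruction since $\tn$ is aspherical; the torsor structure of $M$ over $\cotan B/P$ coming from the fibrewise translation action, well defined because the action--angle transition maps are affine with linear parts prescribed by $\rho$; trivialisation of the torsor by a section; equivalently the sheaf sequence $0 \to \underline{P} \to \underline{E} \to \underline{E/P} \to 0$ identifying $\mathrm{H}^1(B;\underline{E/P})$ with $\mathrm{H}^2(B;\zn_{\rho})$) is precisely the standard argument of that reference. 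The only step you assert rather than verify is that the torsor class in $\mathrm{H}^1(B;\underline{E/P})$ maps to the obstruction class $c$ under the connecting isomorphism, but this is routine and does not constitute a gap.
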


It turns out that the (free) monodromy and Chern class of a Lagrangian
fibration completely describe the \emph{topological} type of the
fibration, \emph{i.e.} these invariants are topologically sharp. It is natural to try to carry this point of
view over to the symplectic classification (the
classification of fibrewise symplectomorphisms) of such
fibrations. There are however some additional subtleties to take into
account, which have to do with the concrete choice of period lattice bundle
$P \to B$ as the following examples show.

\begin{exm}
  \label{exm:period}
  This example is due to Lukina \cite{luk}. Let $(x,y)$ be
  coordinates on $\cotan S^1$, where $y$ is the coordinate along the
  fibre. Define period lattice bundles $P$ and $P'$ by
  \begin{equation*}
    P = \{(x,y) \in \cotan S^1 | \, y \in \Z \} \qquad  P' = \{(x,y) \in
    \cotan S^1 |\, y \in 2\Z \}
  \end{equation*}
  Consider Lagrangian fibrations $\cotan S^1/P \to B$ and
  $\cotan S^1/P' \to B$ where the symplectic forms are obtained from the
  canonical symplectic form $\omega_0 = \de x \wedge \de y$ on $\cotan
  S^1$. The symplectic volumes of $\cotan S^1/P$ and $\cotan S^1/P'$
  are $1$ and $2$ respectively and so these spaces cannot be symplectomorphic.
\end{exm}

\begin{exm}
  This example elaborates on the previous one. The idea of Example \ref{exm:period} is to change the volume of the fibre, which is tantamount to
  asking that the two period lattice bundles $P \to S^1$ and $P' \to
  S^1$ do not have the same integral affine invariants. The volume is
  clearly one such invariant and it is a symplectic
  invariant. However, the following shows that it is not a
  \emph{complete} invariant. Let $\cotan \ttw$ be the cotangent bundle
  of $\ttw$ with coordinates $(\mathbf{x}, \mathbf{y})$ (where
  $\mathbf{x}$ are coordinates on $\ttw$) and canonical symplectic
  form. Define two period-lattice bundles $Q$ and $Q'$ by 
  \begin{equation*}
    Q = \{(\mathbf{x},\mathbf{y}) \in \cotan \ttw | \, \mathbf{y} \in \Z
    \times \Z \} \qquad  Q' = \{(\mathbf{x},\mathbf{y}) \in
    \cotan \ttw | \mathbf{y} \in 2\Z \times \frac{1}{2}\Z\}
  \end{equation*}
  The resulting Lagrangian fibrations $\cotan \ttw/ Q \to \ttw$ and
  $\cotan \ttw/ Q' \to \ttw$ cannot be fibrewise symplectomorphic
  (this can be seen in local coordinates), but the fibres have the
  same volume by construction.
\end{exm}
\noindent
The above examples show that isomorphic period-lattice bundles $P,
P' \to B$ that are not integral-affinely isomorphic  (\emph{i.e.} the
isomorphism on the fibre cannot be expressed by an integral matrix
which is invertible over the integers) yield symplectically
\emph{distinct} symplectic reference Lagrangian fibrations. 
Thus the definition of a symplectic reference Lagrangian fibration
with given period-lattice bundle is subtle as the explicit choice of period-lattice bundle (up to its integral
affine invariants) matters. Let a fibrewise symplectomorphism
covering the identity be given

\begin{equation*}
  \xymatrix{ M \ar^-{\Phi}[0,2] \ar[1,1]& & M' \ar[1,-1]\\
    & B}
\end{equation*}
\noindent
and fix an explicit choice of period-lattice bundle
$P \to B$ for the Lagrangian fibration $M \to B$. Then a local
calculation in action-angle coordinates shows that a necessary
condition for a fibrewise symplectomorphism covering the identity to exist is that the
period-lattice bundle $P' \to B$ associated to $M' \to B$ is precisely
the period lattice bundle $P \to B$, whereby 'precisely' indicates
that the explicit choices of period lattice bundles $P,P' \to B$ be
the same. In the rest of this paper, we fix an explicit choice of the
period-lattice bundle $P \to B$ thereby bypassing the above
difficulties.

\begin{defn} \label{defn:sympref}
  The fibration $(\cotan B/P, \omega_0) \to B$ is the
  \emph{symplectic reference Lagrangian fibration with period-lattice
    bundle $P$}
\end{defn}

Having fixed a choice of symplectic reference Lagrangian fibration, it is possible to tackle the symplectic classification. A symplectic reference Lagrangian fibration always
admits a global Lagrangian section, given by
the image of the zero section $s_0 : B \to \cotan B$ under the
quotient. The Lagrangian class measures the obstruction to the
existence of such a section. The next example shows how in general the notion of
isomorphism in the symplectic sense is distinct from the same notion
in the topological category.

\begin{exm} \label{exm:torus}
  Let $B = \ttw = \R^2/\Z^2$ be the torus with standard integral affine structure. Let $(x,y)$ be local coordinates $(x,y)$ and consider the
  Lagrangian fibration $ \pi: M = \ttw
  \times \ttw = \cotan \ttw /P \to B$ where
  $P \to B$ is given by the span of the (globally defined) $1$-forms
  $\de x$, $\de y$. Let $\omega_0$ be the symplectic form on $M$
  arising from the standard quotient construction outlined above and
  let $\omega_{\alpha} = \omega_0 + \pi^* \phi_{\alpha}$, where $\phi_{\alpha} = \alpha \de x
  \wedge \de y \in \mathrm{H}^2(\ttw; \R)$. It follows from the
  classification in \cite{misha} that the bundles $(M,\omega_0) \to B$
  and $(M,\omega) \to B$ are fibrewise symplectomorphic (over the
  identity map on $B$) if and only if $\alpha \in \Z$. In particular, $\alpha$ mod $1$ determines the class of the fibration up to fibrewise symplectomorphism. 
\end{exm}
\noindent
A description of the Lagrangian class in full generality can be found
in \cite{daz_delz, zun_symp2}. For the purpose of this paper, it suffices to notice that, for fixed monodromy representation $\rho$ and Chern class $c \in \mathrm{H}^2(B;\zn_{\rho})$, the space of symplectically distinct Lagrangian fibrations is a quotient of $\mathrm{H}^2(B;\R)$. In the case of the Klein bottle $\kt$ we have
$\mathrm{H}^2(\kt; \R) = 0$ and so there are no symplectic invariants
to be considered in the classification of Lagrangian fibrations whose
base space is $\kt$.

\subsection{Integral Affine Manifolds}
This subsection relates the integral affine structure inherited by the base
space $B$ of a Lagrangian fibration to the monodromy of the Lagrangian
fibration itself. In particular, the linear part of the integral
affine structure completely determines the monodromy and so, in order
to classify the possible monodromies of  Lagrangian fibrations with
base space $B$, it is enough to classify the possible integral affine
structures on $B$. The reader is referred to \cite{hirsch} and the
references therein for more details about affine geometry.\\

The following basic lemma is the starting point.

\begin{lemma} \label{lemma:easy}
  A manifold $B$ is the base space of a Lagrangian fibration if and
  only if it is an integral affine manifold.
\end{lemma}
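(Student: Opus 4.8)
The plan is to prove the two implications separately, invoking Theorem \ref{thm:la} for the forward direction and the cotangent-bundle quotient construction of Definition \ref{defn:sympref} for the converse. Throughout, $B$ is a manifold of dimension $n$ and a Lagrangian fibration has fibre $\tn$ by Theorem \ref{thm:la}.

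For the forward implication, suppose $\fib$ is a Lagrangian fibration. Theorem \ref{thm:la} provides, around every point $b \in B$, a neighbourhood $U_b$ carrying action-angle coordinates $(a^1, \ldots, a^n, \alpha^1, \ldots, \alpha^n)$; the action functions $a^i$ are constant along the fibres and hence descend to a chart $\psi_b = (a^1, \ldots, a^n) : U_b \to \Rn$ on the base. These charts cover $B$, so it remains to show their transition maps lie in $\affr$. On an overlap $U_b \cap U_{b'}$ both sets of angle coordinates take values in $\tn = \Rn/\zn$, and in either chart the symplectic form reads $\omega = \sum_i \de a^i \wedge \de \alpha^i$. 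Writing the change of actions as $\tilde a = \Phi(a)$, the requirement that $\Phi$ be compatible with $\omega$ and carry one angle-lattice $\zn$ to the other forces the Jacobian $\partial \tilde a/\partial a$ to lie in $\gln$, while the remaining freedom is a translation recording the choice of origin for the actions. Hence $\psi_{b'} \circ \psi_b^{-1} \in \affr = \gln \ltimes \Rn$, and $B$ is an integral affine manifold.

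For the converse, suppose $B$ carries an integral affine atlas $\{(U_\alpha, \phi_\alpha)\}$ with transition functions in $\affr$. On each chart let $x^1, \ldots, x^n$ be the affine coordinates and define $P|_{U_\alpha} \subset \cotan U_\alpha$ to be the $\Z$-span of the closed $1$-forms $\de x^1, \ldots, \de x^n$. If $\tilde x = A x + c$ with $A \in \gln$ is an affine transition, then $\de \tilde x^i = \sum_j A^i_j\, \de x^j$, and since $A$ is invertible over $\Z$ the two integral spans coincide; thus the local lattices $P|_{U_\alpha}$ glue to a globally defined period-lattice bundle $P \to B$ that is locally spanned by closed forms. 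The canonical symplectic form $\omega_0$ on $\cotan B$ therefore descends to the quotient, and $(\cotan B/P, \omega_0) \to B$ is the symplectic reference Lagrangian fibration of Definition \ref{defn:sympref}, with compact fibre $\Rn/\zn \cong \tn$. This exhibits $B$ as the base of a Lagrangian fibration.

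The main obstacle is the integrality claim in the forward direction: showing that the linear part of each transition between action charts lies in $\gln$ rather than merely in $\mathrm{GL}(n,\R)$. This rests on the observation that in every action-angle chart the period lattice of the angle variables is canonically identified with $\zn$; any coordinate change gluing two such charts must respect this lattice together with the symplectic form, which pins the Jacobian of the action change down to a unimodular integer matrix. The accompanying verification that the residual freedom is precisely a translation in $\Rn$ is a direct computation in action-angle coordinates of the type carried out in \cite{dui}.
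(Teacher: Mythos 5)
Your proof is correct and follows the same route the paper (implicitly) takes: action-angle coordinates from Theorem \ref{thm:la} yield integral affine action charts for the forward direction, while the period-lattice quotient $\cotan B/P$ of Definition \ref{defn:sympref} gives the converse. The paper only sketches both directions and defers the details to \cite{cush_dav,dui,zun_symp2}, so your write-up is simply a more explicit rendering of the same argument.
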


\noindent
Showing that the
base space of a Lagrangian fibration inherits an integral affine
structure is an instructive exercise in dealing with Hamiltonian
$\tn$-actions, while the other direction is just a computation
in local coordinates to make sure that the symplectic form defined
locally is actually a global form. For more details, see
\cite{cush_dav,dui, zun_symp2}. \\

Given an integral affine manifold $B$, consider the associated \emph{affine monodromy} representation $\lambda :
\pi_1(B,b) \to \affr$ \cite{hirsch}, where $b \in B$ is a basepoint. Composing with the projection $\affr \to \gln$, obtain the \emph{linear monodromy} representation $\tilde{\rho}: \pi_1(B,b) \to \gln$. These maps depend on the choice of basepoint $b \in B$, but their conjugacy classes do not. $\tilde{\rho}$ is the linear monodromy of the flat, torsion free connection $\nabla$ that $B$ inherits from the integral affine structure. It is well-known that the linear monodromy of the integral affine structure induced on the base space of a Lagrangian fibration $B$ is equal to the monodromy of the fibration in the sense of Definition \ref{defn:free_mon} \cite{daz_delz,dui,zun_symp2}. This approach is very helpful, for instance, to see that trivial
monodromy implies that the Lagrangian fibration is a principal
$\tn$-bundle. Locally a Lagrangian fibration is a principal torus
bundle. If the monodromy is trivial, then so is the linear
holonomy of $\nabla$; parallel transport
the free and transitive local action using $\nabla$ to define a globally free and
transitive $\tn$-action.

\subsection{Affine Structures on $\kt$}
In order to classify Lagrangian fibrations whose base space is $\kt$
it is necessary to classify the integral affine structures that $\kt$
admits. While this is a hard task for dimensions greater than $2$,
it is a manageable problem for surfaces. \cite{misha} classifies
Lagrangian fibrations whose base space is $\ttw$; a key element in the proof is the main result in \cite{hirsch}, briefly outlined below. 
Let $B$ be a compact, affine $n$-dimensional manifold with nilpotent fundamental
group. Then a natural question to ask is what the possible affine
universal covers $\tilde{B}$ of $B$ are. Recall that there exists a local diffeomorphism $D: \tilde{B} \to \Rn$ called the \emph{developing map} \cite{fur}.

\begin{defn}
 An affine manifold $B$ is \emph{complete} if its developing map $D :
 \tilde{B} \to \Rn$ is a homeomorphism
\end{defn}
\noindent
Throughout this paragraph $B$ is compact and its fundamental group $\pi_1(B)$ is nilpotent. Compactness does not imply completeness as shown in \cite{fur}. However, if $B$ admits a
\emph{parallel volume form}, \emph{i.e.} a non-zero exterior volume
form on $\Rn$ which is invariant under the induced action of the
linear holonomy group, then $B$ is complete. These two properties are equivalent as shown in \cite{hirsch}. It is important to remark the importance of the
parallel volume form
geometrically. Its existence means that there are no expansions in the
linear holonomy group and this implies that the linear part of the affine action of $\pi_1(B)$
on $\Rn$ is actually \emph{unipotent}. This last implication relies
heavily on the assumption that $\pi_1(B)$ is nilpotent and that the
manifold $B$ is compact. The following simple example shows that the
result fails for non-compact manifolds with abelian fundamental group.

\begin{exm}
  Let $Y = \R^2 - \{0\}$, let $\pi = \pi_1(Y)$ denote its
  fundamental group and endow $Y$ with the integral affine structure
  arising from the inclusion $\xymatrix@1{Y \; \ar@{^{(}->}[r] &
    \R^2}$. Consider the following action
  \begin{align} \label{eq:17}
    \pi &\to \mathrm{GL}(2,\Z) \nonumber\\
    \zeta &\mapsto
    \begin{pmatrix}
      -1 & 0 \\
      0 & -1
    \end{pmatrix}
  \end{align}
  \noindent
  where $\zeta$ denotes a fixed generator of $\pi$. This action
  corresponds to considering the involution $-I : Y \to Y$. The above
  action is free and properly discontinuous, so that $Y/\langle -I \rangle$ is an
  integral affine manifold with linear (and affine) holonomy map given
  by (\ref{eq:17}). Note that $Y/\langle -I \rangle \cong Y$ and thus the above is an example of a non-unipotent integral affine structure on a
  non-compact manifold $Y$ with abelian fundamental group.
\end{exm}

A compact orientable integral affine
manifold has a parallel volume form, since the linear holonomy lies
entirely in $\mathrm{SL}(n,\Z)$. \cite{hirsch} proves the following

\begin{thm} \label{thm:torus}
  Any compact orientable integral affine manifold $B$ with nilpotent fundamental
  group is complete and the linear holonomy is unipotent.
\end{thm}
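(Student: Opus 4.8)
The plan is to obtain the theorem as an assembly of the two properties of parallel volume forms recorded immediately before its statement, whose substantive content lies in \cite{hirsch}. The key preliminary observation is that orientability pins down the linear holonomy very tightly. Since $B$ is integral affine, its transition functions lie in $\affr$, so the linear monodromy $\tilde{\rho} : \pi_1(B) \to \gln$ takes values in $\gln$; orientability then forces every holonomy element to have determinant $+1$, so that $\tilde{\rho}$ in fact lands in $\mathrm{SL}(n,\Z)$.

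First I would exhibit the parallel volume form. Because $\mathrm{SL}(n,\Z) \subset \mathrm{SL}(n,\R)$, the image of $\tilde{\rho}$ preserves the standard volume form $\de x^1 \wedge \cdots \wedge \de x^n$ on $\Rn$, which is exactly a non-zero exterior $n$-form invariant under the linear holonomy group, i.e. a parallel volume form on $B$. With the standing hypotheses that $B$ is compact and $\pi_1(B)$ is nilpotent, I would then invoke the equivalence from \cite{hirsch} between completeness and the existence of a parallel volume form to conclude that $B$ is complete, i.e. that its developing map $D : \tilde{B} \to \Rn$ is a homeomorphism. For the second assertion, the same form feeds the stated implication: the existence of a parallel volume form rules out expansions in the linear holonomy and, using nilpotency of $\pi_1(B)$ and compactness of $B$, forces the linear part of the affine action to be unipotent; hence $\tilde{\rho}$ has unipotent image.

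The hard part, were one to prove the cited facts rather than quote them, is the unipotency statement. The heart of the matter is an algebraic argument on the Zariski closure $G$ of $\tilde{\rho}(\pi_1(B))$ in $\mathrm{SL}(n,\R)$: this $G$ is a nilpotent algebraic group preserving the volume form, and one must show that each of its elements is unipotent, i.e. has trivial semisimple part. The delicate point --- and precisely where compactness enters --- is to exclude a nontrivial semisimple part, since volume-preservation alone does not suffice: the earlier example $Y/\langle -I \rangle$ displays a volume-preserving holonomy $-I$ that is semisimple and not unipotent, so the conclusion genuinely fails for non-compact $B$ even with abelian $\pi_1(B)$. Ruling this out requires the interplay of the developing map, compactness, and nilpotency carried out in \cite{hirsch}. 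Granting those results, however, the theorem is exactly the short reduction above and needs no further computation.
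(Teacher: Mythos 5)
Your proposal is correct and matches the paper's treatment: the paper likewise notes that orientability together with integrality places the linear holonomy in $\mathrm{SL}(n,\Z)$, so the standard volume form on $\Rn$ is parallel, and then attributes both completeness and unipotency to the cited results of \cite{hirsch} for compact affine manifolds with nilpotent fundamental group admitting a parallel volume form. Your additional remarks on where compactness enters (and the relevance of the $Y/\langle -I\rangle$ example) are consistent with the paper's own discussion.
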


In particular the following corollary also holds and is proved in \cite{hirsch}

\begin{cor}
  Any integral affine structure on $\tn$ is complete.
\end{cor}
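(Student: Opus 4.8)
The plan is to deduce this immediately from Theorem \ref{thm:torus} by verifying that the $n$-torus satisfies its three hypotheses, none of which depends on the particular integral affine structure chosen. The key conceptual point is that an integral affine structure is additional data---an atlas with transition functions in $\affr$---layered on top of the fixed smooth manifold $\tn = \Rn/\zn$, so the topological features of $\tn$ are unaffected by which such structure we equip it with.

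First I would observe that compactness, orientability, and the isomorphism type of the fundamental group are invariants of $\tn$ as a smooth manifold, and are therefore independent of the choice of integral affine structure. In particular $\tn$ is compact, it is orientable, and its fundamental group is $\pi_1(\tn) \cong \zn$. Next I would note that $\zn$ is abelian and hence nilpotent, so the hypothesis on the fundamental group in Theorem \ref{thm:torus} is met. Combined with compactness and orientability, all hypotheses of the theorem hold for $\tn$ equipped with any integral affine structure whatsoever.

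Applying Theorem \ref{thm:torus} then yields that any such structure is complete, and moreover that its linear holonomy is unipotent. The only point requiring care---and it is conceptual rather than technical---is the recognition that the hypotheses of Theorem \ref{thm:torus} are purely topological, so they are automatically satisfied by \emph{every} integral affine structure on $\tn$ rather than by some distinguished one. Once this is granted there is no further obstacle, and the corollary follows without additional computation.
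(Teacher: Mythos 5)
Your proof is correct and follows the same route the paper intends: the corollary is stated as an immediate consequence of Theorem \ref{thm:torus}, and your verification that $\tn$ is compact, orientable, and has abelian (hence nilpotent) fundamental group---all properties independent of the chosen integral affine structure---is exactly the required check. No further comment is needed.
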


Theorem \ref{thm:kt} below shows how to extend Theorem \ref{thm:torus} to a more general
family of integral affine manifolds, namely those obtained as a
quotient of a compact integral affine manifold with nilpotent
fundamental group by some integral affine action.

\begin{thm} \label{thm:kt}
  Let $B$ be a compact integral affine manifold and suppose that there exists a regular covering $\hat{B} \to B$ of $B$ by a compact
  orientable integral affine manifold $\hat{B}$ whose fundamental
  group is nilpotent. Then any integral affine structure on $B$ is complete. 
\end{thm}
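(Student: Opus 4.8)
The plan is to deduce completeness of $B$ from that of $\hat{B}$ by observing that the two manifolds share a common affine universal cover, on which the developing map is one and the same. Concretely, I would fix an arbitrary integral affine structure on $B$ and pull it back along the covering map $p : \hat{B} \to B$; since $p$ is a local diffeomorphism this endows $\hat{B}$ with an integral affine structure for which $p$ is affine. The first step is to check that this induced structure on $\hat{B}$ meets the hypotheses of Theorem \ref{thm:torus}: compactness of $\hat{B}$ and nilpotency of $\pi_1(\hat{B})$ are assumed and are topological, hence independent of the chosen structure, while orientability forces the linear holonomy of $\hat{B}$ into $\mathrm{SL}(n,\Z)$ and so produces a parallel volume form. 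Theorem \ref{thm:torus} then applies directly and shows that $\hat{B}$ is complete.

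The second and crucial step is to identify the developing maps. Because $p : \hat{B} \to B$ is a covering, the universal covers coincide, $\tilde{\hat{B}} = \tilde{B}$, and the affine structure pulled up from $B$ to $\tilde{B}$ agrees, by functoriality of pullback along $\tilde{B} \to \hat{B} \to B$, with the structure pulled up from $\hat{B}$. Since the developing map is obtained by analytic continuation of affine charts on the simply connected cover, it depends only on this affine structure on $\tilde{B}$; consequently the developing map $D : \tilde{B} \to \Rn$ of $B$ and the developing map $\hat{D} : \tilde{\hat{B}} \to \Rn$ of $\hat{B}$ are the same map. Completeness being by definition the assertion that this map is a homeomorphism, the completeness of $\hat{B}$ furnished by the previous step immediately yields that $D$ is a homeomorphism, i.e. that $B$ is complete. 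As the integral affine structure on $B$ was arbitrary, every such structure is complete.

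I expect the main obstacle to be making rigorous the claim that the two developing maps coincide. This requires care about the normalisation of the developing map, which is canonical only up to post-composition with an element of $\affr$, and about the precise sense in which pulling the affine atlas of $B$ back to $\tilde{B}$ through $\hat{B}$ reproduces the atlas obtained by pulling it back directly. Once a basepoint and a germ of an affine chart there are fixed, analytic continuation along paths in $\tilde{B}$ is determined by the affine charts alone, so the two constructions agree; moreover the residual ambiguity in $\affr$ is harmless, since elements of $\affr$ are homeomorphisms of $\Rn$ and hence preserve the property of being a homeomorphism. The only remaining verification is the routine observation, already recorded above, that orientability of $\hat{B}$ guarantees the parallel volume form that Theorem \ref{thm:torus} needs as input.
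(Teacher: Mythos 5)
Your proposal is correct and follows essentially the same route as the paper: pull the structure back to $\hat{B}$, verify the hypotheses of Theorem \ref{thm:torus} there, and use the fact that $B$ and $\hat{B}$ share an affine universal cover (hence a developing map) to transfer completeness down to $B$. Your additional care about the normalisation of the developing map up to post-composition by $\affr$ is a worthwhile refinement of a point the paper leaves implicit.
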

\begin{proof}
  Fix an integral affine structure on $B$ and pull it back to
  $\hat{B}$ via the regular covering $\hat{B} \to B$. Let $X$ be the
  integral affine universal cover of $B$ such that $D : X \to \Rn$
  induces the given integral affine structure. There is a commutative diagram 
  \begin{equation*}
    \xymatrix{ & X \ar[2,0] \ar[1,-1]\\
      \hat{B} \ar[1,1] & \\
      & B}
  \end{equation*}
  \noindent
  which commutes by naturality of the pullback of integral
  affine structures. In particular it shows that $X$ is an integral affine universal
  cover for both $\hat{B}$ and $B$ and $\hat{B}$ satisfies the hypothesis
  of Theorem \ref{thm:torus}. Hence $X$ is homeomorphic to $\Rn$
  with the standard integral affine structure, proving completeness of $B$.
\end{proof}

\begin{cor}\label{cor:klein}
  Any integral affine structure on $\kt$ is complete.
\end{cor}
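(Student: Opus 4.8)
The plan is to exhibit $\kt$ as a base space satisfying the hypotheses of Theorem \ref{thm:kt}, so that completeness follows immediately. First I would recall that the Klein bottle is compact and that its orientation double cover is the two-torus $\ttw$. Since this covering $\ttw \to \kt$ is two-fold, the image of $\pi_1(\ttw)$ inside $\pi_1(\kt)$ is a subgroup of index two; as any index-two subgroup is normal, the covering is regular, which is exactly the form of covering required by Theorem \ref{thm:kt}.

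Next I would check that $\ttw$ meets the remaining requirements: it is compact and orientable, and its fundamental group is $\Z^2$, which is abelian and hence nilpotent. Given an integral affine structure on $\kt$, I would pull it back along the covering map to obtain one on $\ttw$; this is legitimate because a covering map is a local diffeomorphism, so composing the affine charts of $\kt$ with the covering leaves all transition functions in $\affr$. In this way $\hat{B} = \ttw \to \kt = B$ is precisely a regular covering of a compact integral affine manifold by a compact orientable integral affine manifold with nilpotent fundamental group.

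With these verifications in hand, Theorem \ref{thm:kt} applies verbatim and yields that any integral affine structure on $\kt$ is complete. I do not expect any genuine obstacle here, as the corollary is a direct specialisation of Theorem \ref{thm:kt}; the only points that merit a word are the regularity of the double cover, settled by the index-two argument, and the fact that the integral affine structure pulls back cleanly along the covering to the torus, both of which are routine.
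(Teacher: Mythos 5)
Your argument is correct and is exactly the route the paper intends: the corollary is stated as an immediate specialisation of Theorem \ref{thm:kt}, taking $\hat{B} = \ttw$ to be the orientation double cover of $\kt$, which is regular (index two, hence normal), compact, orientable, and has nilpotent fundamental group $\Z^2$. The paper leaves these verifications implicit, so your write-up simply makes explicit what the paper takes for granted.
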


\section{Classification of Integral Affine Structures on
  $\kt$}\label{sec:class-integr-affine}
Corollary \ref{cor:klein} simplifies the classification of integral affine structures
on $\kt$. Let $\Gamma$ denote the fundamental group of
$\kt$; it follows from Corollary \ref{cor:klein} that the classification of integral affine structures on $\kt$ amounts to classifying all injective homomorphisms
\begin{equation*}
  \Gamma \to \aftw
\end{equation*}
\noindent
inducing a free and properly discontinuous action of $\Gamma$ on
$\R^2$. Let $\Gamma$ have presentation $\Gamma
= \langle a,b \,| \, aba = b \rangle$ and let $G \lhd \Gamma$ be the
abelian subgroup generated by $a,b^2$. This subgroup corresponds to the fundamental
group of the two-fold cover $\ttw$ of $\kt$. By abuse of notation denote the images of the generators $a$, $b$ of $\Gamma$ in
$\aftw$ by $a$ and $b$ respectively. Write $a = (A,\mathbf{x})$ and $ b
= (B, \mathbf{y})$, where $A,B \in \gltw$ and $ \mathbf{x},\mathbf{y} \in \R^2$. The following
facts are true:
\begin{itemize}
\item since $a,b^2$ are generators for $\pi_1(\ttw)$, then $\det A = 1
  = \det B^2$;
\item $b$ is an orientation reversing transformation and hence $\det B
  = -1$;
\item the action of $\Gamma$ on $\R^2$ is free, so that $\mathbf{x},\mathbf{y} \neq 0$,
  as otherwise the origin is a fixed point of the action;
\item the action given by the composite $\xymatrix@1{\pi_1(\ttw) \;
    \ar@{^{(}->}[r] & \Gamma \ar[r] & \aftw}$ is also free and
  properly discontinuous since it defines the integral affine
  structure on $\ttw$ obtained by pulling back a fixed integral affine
  structure on $\kt$.
\end{itemize}

The following two basic lemmas are extremely useful and are proved in \cite{misha}.

\begin{lemma} \label{lemma:basic}
  If $(C,z) \in \aftw$ has no fixed point in $\R^2$, then the linear
  transformation $C$ has $1$ as one of its eigenvalues.
\end{lemma}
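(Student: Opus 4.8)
The plan is to reduce the geometric statement about fixed points to an elementary question about the invertibility of $C - I$. The affine transformation $(C,z) \in \aftw$ acts on $\R^2$ by $\mathbf{v} \mapsto C\mathbf{v} + z$, so a point $\mathbf{v} \in \R^2$ is fixed precisely when $C\mathbf{v} + z = \mathbf{v}$, that is, when $(C - I)\mathbf{v} = -z$. Thus the existence of a fixed point for $(C,z)$ is equivalent to the solvability of this inhomogeneous linear system.

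I would then argue by contraposition. Suppose that $1$ is \emph{not} an eigenvalue of $C$. Then $\det(C - I) \neq 0$, so $C - I$ is invertible as a real matrix, and the system $(C-I)\mathbf{v} = -z$ admits the unique solution $\mathbf{v} = (I - C)^{-1} z$. This $\mathbf{v}$ is a fixed point of $(C,z)$ in $\R^2$, contradicting the hypothesis that $(C,z)$ has no fixed point. Hence, if $(C,z)$ has no fixed point, then $1$ must be an eigenvalue of $C$, which is the claim.

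The argument rests on a single observation and presents no genuine obstacle; the only point worth recording is that invertibility of $C - I$ over $\R$ (equivalently $\det(C-I)\neq 0$) is exactly the condition that $1$ fails to be an eigenvalue of $C$, and it is this equivalence that translates the linear-algebra statement into the eigenvalue statement in the lemma. Note finally that although $C \in \gltw$ has integer entries, the fixed point $\mathbf{v}$ is sought in $\R^2$ and the solution $(I-C)^{-1}z$ is taken over $\R$, so no integrality subtlety intervenes.
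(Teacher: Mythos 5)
Your argument is correct and complete: reducing the fixed-point condition to the solvability of $(C-I)\mathbf{v}=-z$ and arguing by contraposition is exactly the standard proof of this fact. The paper itself does not prove the lemma but defers to the cited reference, and your reasoning is the expected one, so there is nothing to add.
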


Lemma \ref{lemma:basic} is sometimes referred to in the literature as
'Hirach's principle' \cite{hirsch}. In some sense, the work done
in \cite{hirsch} generalises the above lemma to the case when the
affine manifold is compact and has nilpotent fundamental group.

\begin{lemma}\label{lemma:trace}
  For a matrix $D$ having $1$ as an eigenvalue, if $\det D = 1$
  we have that $\mathrm{Tr} D = 2$ and if $\det D = -1$ then
  $\mathrm{Tr} D =0$
\end{lemma}

By Lemma \ref{lemma:trace} the transformation $B$ satisfies $\det B =-1$ and $\mathrm{Tr}
B=0$. What matters in the following classification is simply the conjugacy class of $B$ in $\gltw$. There are only two such conjugacy classes and,
within each class, we are free to choose whichever representative
simplifies calculations the most. Take the following as initial
representatives for the two conjugacy classes, following \cite{sf}

\begin{equation*} \label{eq:100}
  B_1 =	\begin{pmatrix}
    1 & 0 \\
    0 & -1
  \end{pmatrix}
  \qquad
  \text{and}
  \qquad 
  B_2 = \begin{pmatrix}
    0 & 1 \\
    1 & 0
  \end{pmatrix}
\end{equation*}

\noindent
It remains to determine the matrices $A
\in \gltw$ with $\det A =1$, $\mathrm{Tr} A = 2$ satisfying the
relation $ABA = B$ where $B$ is one of $B_1, B_2$ in \ref{eq:100}. An
algebraic calculation shows that the only possibilities are
as follows

\begin{align*}
	A = I &, \qquad
	B_1 = \begin{pmatrix}
    1 & 0 \\
    0 & -1
  \end{pmatrix} \\
  A = A_1 =
  \begin{pmatrix}
    1 & n \\
    0 & 1
  \end{pmatrix}
  &,\qquad
  B_1 =
  \begin{pmatrix}
    1 & 0 \\
    0 & -1
  \end{pmatrix} \\
  A_2 =
  \begin{pmatrix}
    1 & 0 \\
    p & 1
  \end{pmatrix}
  &, \qquad
  B_1=
  \begin{pmatrix}
    1 & 0 \\
    0 & -1
  \end{pmatrix} \\
  A= I &, \qquad
  B_2 = \begin{pmatrix}
    0 & 1 \\
    1 & 0
  \end{pmatrix} \\
  A_3 =
  \begin{pmatrix}
    1 + n & n \\
    -n & 1 - n
  \end{pmatrix}
  &, \qquad
  B_2=
  \begin{pmatrix}
    0 & 1 \\
    1 & 0
  \end{pmatrix} \\
  A_4 = \begin{pmatrix}
    1 - n & n \\
    -n & 1 + n
  \end{pmatrix}
  &, \qquad
  B_2=
  \begin{pmatrix}
    0 & 1 \\
    1 & 0
  \end{pmatrix}  
\end{align*}
\noindent
where $p,n \neq 0$. \\

In order to determine the translation components of $a$ and $b$, impose the relation $aba=b$ once the linear part has been determined. It is possible to conjugate the generating set $a,b$  
by elements in
$\aftw$ and also to switch from generators $a,b$ to any other
generating set for $\Gamma$ satisfying the group relation. Let
$\mathbf{x}=(x_1,x_2)$ and $\mathbf{y}=(y_1,y_2)$ denote the
translation components of $a$ and $b$ respectively. 
The two-fold covering $\ttw \to \kt$ induces an injective
homomorphism $\pi_1(\ttw) \to \pi_1(\kt)$; the composite
map $\pi_1(\ttw) \to \pi_1(\kt) \to \aftw$ yields an integral affine
structure on $\ttw$, which is generated by $a$ and $b^2$ as above. When the linear part of $a$ is just the identity, then the
  translation components of $a, b^2$ need define a lattice for the
  action to be free. In particular, the
  matrix
  \begin{equation*}
    \begin{pmatrix}
      x_1 & 2y_1 \\
      x_2 & 0
    \end{pmatrix}
  \end{equation*}
  \noindent
  is non-degenerate and so $x_2, y_1 \neq 0$. Furthermore, the translation component of $b^2$ is non-trivial. It is necessary to check freeness of the action $\Gamma \to \aftw$. To
this end, it is useful to note that words in $\Gamma$ can be reduced to words of the form $a^q b^k$ for $k,q \in \Z$. In what follows, the explicit calculations appear in the first case only;
all other cases are similar in spirit but the calculations are more
cumbersome. Each case is dealt with separately and the corresponding generating set for the integral affine structure on the two-fold cover
$\ttw$ of $\kt$ is also indicated. Throughout the rest of the section, let $\mathbf{e}_1, \mathbf{e}_2$ denote the standard basis of $\R^2$. 


\subsection*{Case $A=I$, $B = B_1$}
The group relation on the translation components
  reduces in this case to $ B_1 \mathbf{x} = - \mathbf{x}$.
  Hence the translation component of $a$ has to lie in the
  $-1$-eigenspace of the linear part of $B$ and $x_1 = 0$. Conjugating
  both $a$ and $b$ by $(I,
      \frac{y_2}{2} \mathbf{e}_2)$ assume that $y_2 =0$. Changing generators to $a^{-1},b^{-1}$
  if necessary, assume further that $x_2, y_1 > 0$. Hence the generating set for these integral affine structures are of the form
  \begin{equation}
    \label{eq:5}
    a = (I, x\mathbf{e}_2), \qquad
    b= (B_1, y \mathbf{e}_1)
  \end{equation}
  \noindent
  where $x,y > 0$. It remains to check that this action is free. It is enough to consider the cases where the word
  in the group is of the form $a^q b^{2k+1}$ for $ k,q \in \Z$, since the case $a^q b^{2k}$ follows from \cite{misha}. Then 
  \begin{equation*}
    a^q = (I, qx \mathbf{e}_2), \qquad
    b^{2k+1}= (B_1, (2k+1)y\mathbf{e}_1)
  \end{equation*}
  \noindent
  for all $k,q \in \Z$. For the action to be free we want there to be no
  solution to $(a^q b^{2k+1}) \cdot \mathbf{z} = \mathbf{z} $ for $\mathbf{z} \in \R^2$. If this equation has a solution then $ 0 = -(2k+1)y$, which is absurd since $y \neq 0$. Hence the
  action is free. The induced structure on $\ttw$ is
   \begin{equation*}
    a = (I, x\mathbf{e}_2), \qquad
    b^2= (I, 2y \mathbf{e}_1)
  \end{equation*}
  \noindent
  where $x,y >0$.
\subsection*{Case $A=A_1$, $B=B_1$} First, since $a^{-1} b a^{-1}= b$, it is possible to change the
  generating set from $a,b$ to $a^{-1}, b$. Thus, switching $a$ with  $
  a^{-1}$ if necessary,
  assume that $n > 0$. Conjugating $a,b$ by $(I,
      -\frac{x_1}{n} \mathbf{e}_2)$
  it is possible to take $x_1=0$ and so $x_2 \neq 0$. Furthermore, conjugating both generators
  by $(-I, \mathbf{0})$ if necessary, choose $x_2 > 0$. Now the relation
  $aba=b$ on the translation components implies that 
  $ y_2 = x_2$ since $n \neq 0$. $a,b^{-1}$ form a generating set for $\Gamma$, so switching $b$ with 
  $b^{-1}$ if necessary, assume further that $y_1 >
  0$. Therefore generators for this family of homomorphisms are given by
  \begin{equation}
    \label{eq:4}
    a = (A_1, x\mathbf{e}_2), \qquad
    b= (B_1, y\mathbf{e}_1+x\mathbf{e}_2)
  \end{equation}
  \noindent
  where $x,y > 0$. Using the method outlined above, it can be checked
  that this action is free on $\R^2$. The
  essential aspect of these calculations is that both $x$ and $y$
  are different from $0$. The induced integral affine structure on
  $\ttw$ is given by
  \begin{equation*}
    a = (A_1, x\mathbf{e}_2)
    , \qquad
    b^2= (I, 2y\mathbf{e}_1)    
  \end{equation*}
  \noindent
  where $x,y > 0$ as above.
\subsection*{Case $A=A_2$, $B=B_1$} The group relation on translation components implies that    
 $x_1=0=y_1$. However this makes the translation
  component of $b^2$ equal to the zero vector and this
  is a contradiction.  Hence there are no admissible generating sets of
  this type, as the action is not free.\\

\begin{rk}  
  It is interesting to note that while the lower triangular case for $A$ cannot
  happen for $\kt$, it can for $\ttw$. Let $c,d$ denote generators for
  the fundamental group of
  $\ttw$. \cite{misha} shows that if the linear part of one of
  the generators is not diagonal, then the generating set
  is conjugate to one of the form
  \begin{equation*}
    c = \Bigg(
    \begin{pmatrix}
      1 & p \\
      0 & 1
    \end{pmatrix}
    ,
    \begin{pmatrix}
      0 \\
      w
    \end{pmatrix}
    \Bigg), \qquad
    d= \Bigg(
    \begin{pmatrix}
      1 & 0 \\
      0 & 1
    \end{pmatrix}
    ,
    \begin{pmatrix}
      z \\
      0
    \end{pmatrix}
    \Bigg)
  \end{equation*}
  \noindent
  where $p \in \Z$, $p,w,z > 0$. Conjugate $c,d$ by the
  reflection
  \begin{equation*}
    \Bigg(
    \begin{pmatrix}
      0 & 1 \\
      1 & 0
    \end{pmatrix}
    , \mathbf{0} \Bigg)
  \end{equation*}
  \noindent
  so that the linear part of $c$ is in lower triangular
  form. This is tantamount to reordering the basis with respect to which
  we consider the integral affine transformations $c,d$.
  The same argument in the $\kt$ case does not work because conjugation by the
  reflection above swaps the $1$ and $-1$ eigenspaces. This
  asymmetry is reflected geometrically in the fact that the above
  action is shown to be never free.
\end{rk}

\subsection*{Case $A=I$, $B=B_2$} Performing transformations as above, assume that the generating set is of the form
  \begin{equation}
    \label{eq:10}
    a = (I, x(\mathbf{e}_1-\mathbf{e}_2)),
    \qquad
    b= (B_2, y\mathbf{e}_2)
  \end{equation}
  \noindent
  where $x,y >0$. This action is also seen to be free and so it induces an integral affine structure on
  $\kt$. The induced action on $\ttw$ is given by
  \begin{equation*}
    a = (I, x (\mathbf{e}_1-\mathbf{e}_2)), \qquad
    b^2= (I, y(\mathbf{e}_1+\mathbf{e}_2))
  \end{equation*}
  \noindent
  where $x,y > 0$.

\subsection*{Case $A=A_3$, $B=B_2$} The relation $aba=b$ on the translation components implies that
  \begin{equation*}
    \begin{pmatrix}
      1+n & n \\
      1-n & -n
    \end{pmatrix}
    \begin{pmatrix}
      x_1 + x_2 \\
      y_1 + y_2
    \end{pmatrix}
    =
    \mathbf{0}
  \end{equation*}
  \noindent
  Since the determinant of the above matrix is $-2n$ and $n \neq 0$,
  it follows that $x_1 = -x_2$ and $y_1 = - y_2$. But if this is the
  case the vector component of $b^2$ is $\mathbf{0}$ and so the action is not free.

\subsection*{Case $A=A_4$, $B=B_2$} It is convenient to conjugate the
  generating set by the matrix
  \begin{equation*}
    \begin{pmatrix}
      1 & 0 \\
      -1 & 1
    \end{pmatrix}
  \end{equation*}
  \noindent
  so that the linear parts of $a$ and $b$ become
  \begin{equation}
    \label{eq:20}
    A_1 =
    \begin{pmatrix}
      1 & n \\
      0 & 1
    \end{pmatrix},
    \qquad
    B_3=
    \begin{pmatrix}
      1 & 1 \\
      0 & -1
    \end{pmatrix}
  \end{equation}
  \noindent
  As above, let $\mathbf{x}$ and $\mathbf{y}$ denote the vector
  components of the affine transformations $a$ and $b$. Conjugating
  the homomorphism by $(I, -\frac{x_1}{n}\mathbf{e}_2)$,
  assume that $x_1 = 0$. Switching to $a^{-1}$ if necessary, take $n > 0$. Furthermore, conjugating by $(-I,\mathbf{0})$
  if necessary, assume that $x_2 > 0$.  Applying the relation
  $aba=b$ to the vector components, the resulting generating sets are given by
  \begin{equation}
    \label{eq:21}
    a = (A_1, x\mathbf{e}_2)
    \qquad
    b= (B_3, y\mathbf{e}_1 + \frac{n-1}{n}x \mathbf{e}_2)
  \end{equation}
  \noindent
  where $x > 0$ and $y \in \R$ is arbitrary. However, a necessary condition for the action to be free is that
  $2y \neq - \frac{n-1}{n}x$. The induced integral affine structure on $\ttw$ is given by
  \begin{equation*}
    a = (A_1, x\mathbf{e}_2)
    , \qquad
    b^2= (I, (2y+\frac{n-1}{n}x)\mathbf{e}_1)
  \end{equation*}
  \noindent
  where $x>0$ and $2y \neq - \frac{n-1}{n}x$. \\
  \begin{rk}
    Some of the homomorphisms given by equation \eqref{eq:21} are
    conjugate to maps of equation \eqref{eq:4}, as pointed out to the
    author by Ivan Kozlov. In particular, a generating set of equation
    \eqref{eq:21} is conjugate to a homomorphism of equation
    \eqref{eq:4} if and only if the linear part of $a$ takes the
    form
    $$
    \begin{pmatrix}
      1 & 2l+1 \\
      0 & 1
    \end{pmatrix}
    $$
    \noindent
    for $l \in \Z$. Homomorphisms belonging to the family
    \begin{equation}
      \label{eq:2}
      a = \Bigg(
      \begin{pmatrix}
        1 & 2n \\
        0 & 1
      \end{pmatrix}
      , x\mathbf{e}_2 \Bigg)
      \qquad
      b= (B_3, y\mathbf{e}_1 + \frac{n-1}{n}x \mathbf{e}_2)
    \end{equation}
    \noindent
    cannot be conjugate to any of the homomorphisms of equations
    \eqref{eq:5} and \eqref{eq:10} because of the classification of
    integral affine structures on $\ttw$ carried out in \cite{misha}.
  \end{rk}
  Thus there are four families of integral affine structures on
  $\kt$. Families with distinct conjugacy classes of generating sets are
  not conjugate to one another and so integral affine structures on
  $\kt$ given by generating sets of different families are not
  isomorphic. The following theorem summarises the results of this
  section. 
  
  \begin{thm} \label{thm:class}
    The only possible homomorphisms $\Gamma \to \aftw$ inducing
    integral affine structures on $\kt$ are the ones shown in equations
    (\ref{eq:5}), (\ref{eq:4}), (\ref{eq:10}) and (\ref{eq:2}). Integral affine
    structures arising from distinct families are not be isomorphic to
    one another as integral affine structures.
  \end{thm}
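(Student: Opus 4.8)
The plan is to read the classification directly off the case-by-case analysis carried out above, and then to establish the non-isomorphism statement using the structures induced on the two-fold cover. By Corollary \ref{cor:klein} every integral affine structure on $\kt$ is complete, so classifying such structures is the same as classifying injective homomorphisms $\Gamma \to \aftw$ whose image acts freely and properly discontinuously on $\R^2$. Writing $a = (A,\mathbf{x})$ and $b = (B,\mathbf{y})$, the first task is to pin down the admissible linear parts. Since $a, b^2$ generate $\pi_1(\ttw)$ one has $\det A = 1$, while $b$ reverses orientation so $\det B = -1$; freeness forces $a$ and $b$ to have no fixed points, so by Lemma \ref{lemma:basic} their linear parts have $1$ as an eigenvalue, and Lemma \ref{lemma:trace} then gives $\mathrm{Tr}\,A = 2$ and $\mathrm{Tr}\,B = 0$. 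Up to conjugacy $B$ is one of $B_1, B_2$, and solving $ABA = B$ for $A$ under these trace and determinant constraints yields exactly the finite list of pairs $(A,B)$ tabulated above.

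For each admissible pair of linear parts I would impose the relation $aba = b$ on the translation components and normalise $\mathbf{x}, \mathbf{y}$ by conjugating the generating set by elements of $\aftw$ and by passing to the alternative generators $a^{\pm 1}, b^{\pm 1}$, which are permitted since these again satisfy the Klein-bottle relation. Freeness is then checked on the reduced words $a^q b^k$; the only delicate point is the odd powers $a^q b^{2k+1}$, since the even powers reduce to the torus computation of \cite{misha}. This procedure discards the pairs $(A_2, B_1)$ and $(A_3, B_2)$, where the relation forces the translation component of $b^2$ to vanish and the action fails to be free, and retains precisely the families \eqref{eq:5}, \eqref{eq:4}, \eqref{eq:10} together with \eqref{eq:21}. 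After conjugating $(A_4, B_2)$ into the form $(A_1, B_3)$ of \eqref{eq:20}, the remark above shows that the members of \eqref{eq:21} whose linear part of $a$ has off-diagonal entry of the form $2l+1$ with $l \in \Z$ are already conjugate into \eqref{eq:4}, so the genuinely new contribution is the subfamily \eqref{eq:2}. This establishes that the four families \eqref{eq:5}, \eqref{eq:4}, \eqref{eq:10}, \eqref{eq:2} exhaust the admissible homomorphisms.

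It remains to prove that structures drawn from different families are non-isomorphic. Two integral affine structures on $\kt$ are isomorphic precisely when their defining generating sets are conjugate in $\aftw$, after possibly passing to another generating set of $\Gamma$, so I would separate the families by conjugacy invariants. The conjugacy class of the linear part of $a$ already distinguishes several families; to separate those sharing a linear monodromy class I would push the homomorphism down to the index-two subgroup $\pi_1(\ttw)$ generated by $a, b^2$, producing the induced integral affine structure on the two-fold cover $\ttw$ recorded after each case. A conjugacy of Klein-bottle generating sets restricts to a conjugacy of the induced torus generating sets, so it suffices to observe that the four induced structures on $\ttw$ fall into distinct isomorphism classes under the classification of integral affine structures on $\ttw$ in \cite{misha}; the remark above records exactly the non-conjugacy of \eqref{eq:2} from \eqref{eq:5} and \eqref{eq:10} on these grounds.

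I expect the non-isomorphism statement to be the main obstacle, and within it the subtle point is separating families that share the same linear monodromy conjugacy class — precisely the situation flagged in the remark, where part of family \eqref{eq:21} collapses into \eqref{eq:4}. The reduction to the double cover $\ttw$ and the invocation of the torus classification of \cite{misha} is what makes this tractable, since on $\kt$ itself the vanishing of $\mathrm{H}^2(\kt;\R)$ removes any continuous symplectic invariant but does not by itself rule out conjugacies between the discrete families.
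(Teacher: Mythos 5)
Your proposal follows essentially the same route as the paper: the determinant and trace constraints from Lemmas \ref{lemma:basic} and \ref{lemma:trace}, the reduction to the two conjugacy classes $B_1, B_2$, the algebraic solution of $ABA=B$, the case-by-case normalisation of translation components with the freeness check on words $a^qb^k$, and the elimination of $(A_2,B_1)$ and $(A_3,B_2)$. Your treatment of the non-isomorphism claim via restriction to the index-two subgroup and the torus classification of \cite{misha} is a slightly more explicit version of what the paper's remark and closing paragraph assert, but it is the same underlying argument.
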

  
  The above classification of integral affine Klein bottles gives
  insight into their geometry as well. Recall from section
  \ref{sec:gener-lagr-fibr} 
  that integral affine structures on a manifold $B$ are as
  flat, torsion-free connections with discrete linear holonomy (lying in $\gln$). If the linear
  holonomy consists of orthogonal matrices, then the connection is
  Levi-Civita. \cite{fur_2} shows that there is only one affine structure on the
  Klein bottle which induces a Levi-Civita connection up to affine
  diffeomorphism. The families given by equations (\ref{eq:5}) and
  (\ref{eq:10}) are not integral-affinely isomorphic and both induce a
  Levi-Civita connection on $\kt$. This should also be compared with the
  $\ttw$ case, where there is only one such family. The reason is that,
  up to conjugation in $\gltw$, there are two distinct integral affine 
  involutions on $\ttw$. 


\section{Explicit Constructions}\label{sec:expl-constr}
This section carries out the explicit constructions
of Lagrangian fibrations whose base space is the Klein bottle
$\kt$. The methods are similar to those in \cite{misha, sf}. The construction is naturally split in two parts: topological and symplectic.

\subsection{Topological Constructions}
As shown in Section \ref{sec:gener-lagr-fibr}, the only topological
invariants of Lagrangian fibrations are the monodromy and Chern
class. The classification of integral affine structures on $\kt$ given
by Theorem \ref{thm:class} gives all possible types of linear
monodromy that can arise. It is important to bear in mind that given
that the monodromy of a Lagrangian fibration is given by taking the
inverse transpose of the corresponding linear monodromy of the base
space (as an integral affine manifold. Set 
$$ A_2 =
\begin{pmatrix}
  1 & 2n \\
  0 & 1
\end{pmatrix}.
$$
\noindent
The monodromy of a Lagrangian fibration over the Klein bottle is then
given by one of the following four families
below 
\begin{equation*}
\rho_i(a) = 
\begin{cases}
I & \text{if $i=1$}; \\
(A_1^{-1})^T & \text{if $i=2$}; \\
I & \text{if $i=3$}; \\
(A_2^{-1})^T & \text{if $i=4$}.
\end{cases}
\qquad
\rho_i(b) =
\begin{cases}
(B_1^{-1})^T & \text{if $i=1$}; \\
(B_1^{-1})^T & \text{if $i=2$}; \\
(B_2^{-1})^T & \text{if $i=3$}; \\
(B_3^{-1})^T & \text{if $i=4$}.
\end{cases}
\end{equation*}
\noindent
where $A_1$ and $B_1,B_2,B_3$ are defined as in Section
\ref{sec:class-integr-affine} and $A_2$ is defined as above. For each such monodromy representation
$\rho_i$, there are Chern classes given by elements in the twisted
cohomology group $\cohoi$. It follows from \cite{daz_delz, zun_symp2}
that \emph{all} elements of $\cohoi$ arise as the Chern class of some
Lagrangian fibration with monodromy $\rho_i$. However, this result is
also proved by means of explicit constructions below.\\ 

The next lemma computes the twisted cohomology groups $\cohoi$. Let $n$ be the non-diagonal entry of $A_1$.

\begin{lemma}
  The twisted cohomology groups $\cohoi$ are given by
  \begin{equation*}
  \cohoi =
  \begin{cases}
  \Z/2
  \oplus \Z & \text{if } i=1; \\
  \Z/2 \oplus \Z/n & \text{if } i=2; \\
  \Z & \text{if } i=3 \\
  \Z/4n & \text{if } i=4.
  \end{cases}
\end{equation*}
\end{lemma}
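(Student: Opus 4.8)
The plan is to compute each twisted cohomology group $\cohoi$ directly from a free resolution of $\Z$ over the group ring $\Z[\Gamma]$, where $\Gamma = \langle a,b \mid aba = b\rangle$ acts on $\Z^2$ via the monodromy representation $\rho_i$. Since $\kt$ is a $K(\Gamma,1)$, the cohomology $\mathrm{H}^\bullet(\kt;\Z^2_{\rho_i})$ is exactly the group cohomology $\mathrm{H}^\bullet(\Gamma;\Z^2_{\rho_i})$, so I can work entirely algebraically. First I would write down the standard free resolution associated to the one-relator presentation of $\Gamma$: this gives a cochain complex
\begin{equation*}
  \Z^2 \xrightarrow{\ \de^0\ } \Z^2 \oplus \Z^2 \xrightarrow{\ \de^1\ } \Z^2,
\end{equation*}
where $\de^0(v) = \big((\rho_i(a)-I)v,\ (\rho_i(b)-I)v\big)$ and $\de^1$ is computed from the Fox derivatives of the relator $r = aba b^{-1}$. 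Then $\coho[i] \cong \operatorname{coker}(\de^1)$, so the whole computation reduces to writing down the $2\times 4$ integer matrix $\de^1$ in each case and finding its Smith normal form.

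The key step is computing the Fox derivatives of the relator. For $r = abab^{-1}$ one has $\partial r/\partial a = 1 + ab$ and $\partial r/\partial b = a - abab^{-1} = a - b^{-1}$ (using $abab^{-1}=1$ in $\Gamma$); applying $\rho_i$ and acting on $\Z^2$ gives the two blocks of $\de^1$ as the matrices
\begin{equation*}
  I + \rho_i(a)\rho_i(b) \qquad\text{and}\qquad \rho_i(a) - \rho_i(b)^{-1}.
\end{equation*}
For each $i$ I would substitute the explicit matrices $\rho_i(a), \rho_i(b)$ read off from the displayed table (recalling $\rho_i$ is the inverse transpose of the linear monodromy and that the relevant matrices are $I$, $(A_1^{-1})^T$, $(A_2^{-1})^T$, $B_1^{-1\,T}$, $B_2^{-1\,T}$, $B_3^{-1\,T}$), assemble the resulting $2\times 4$ matrix, and reduce to Smith normal form. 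The cokernel's elementary divisors then read off the direct-sum decomposition: one expects a $\Z/2$ factor whenever $\rho_i(b)$ is an orientation-reversing involution (so $I+\rho_i(a)\rho_i(b)$ contributes a factor of $2$), a free $\Z$ when the relevant block is degenerate in a direction fixed by the whole representation, and torsion $\Z/n$ or $\Z/4n$ coming from the unipotent entry $n$ of $A_1$ (respectively the entry $2n$ of $A_2$) interacting with the reflection.

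I expect the main obstacle to be purely bookkeeping rather than conceptual: getting the Fox-derivative signs and the inverse-transpose conventions consistent, and then carrying out four Smith-normal-form reductions without arithmetic slips, particularly in case $i=4$ where $\rho_4(a) = (A_2^{-1})^T$ involves the entry $2n$ and $\rho_4(b) = (B_3^{-1})^T$ is not symmetric, so the off-diagonal contributions must be tracked carefully to land on $\Z/4n$ rather than $\Z/2n$ or $\Z/2\oplus\Z/2n$. A useful consistency check at the end is that each answer must be compatible with the known abelianization of $\Gamma$ and with the two-fold cover: restricting $\rho_i$ to the abelian subgroup $G = \langle a, b^2\rangle = \pi_1(\ttw)$ should recover, via the transfer or a direct comparison, the corresponding twisted cohomology of $\ttw$ computed in \cite{misha}, and the orders of the torsion subgroups should behave correctly under this two-to-one relationship. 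This sanity check is what I would use to catch any sign or factor-of-two error before finalizing the four cases.
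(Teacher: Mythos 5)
Your strategy is in substance identical to the paper's: the paper computes $\cohoi$ as the cokernel of the top cochain map of the $\Gamma$-equivariant cellular cochain complex coming from the CW structure $\kt = e^0\cup e^1_1\cup e^1_2\cup e^2$, and that complex is precisely the Fox-calculus complex of the one-relator presentation $\langle a,b\mid abab^{-1}\rangle$ that you write down (the paper's $\partial_2 e^2=(1+b)e^1_1+(a-1)e^1_2$ is the pair of Fox derivatives up to a harmless change of lift). Your proposed consistency check against the double cover $\ttw\to\kt$ is not in the paper but is a reasonable safeguard.

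There is, however, a concrete error in your key step which, if carried through, gives the wrong answer in cases $i=1$, $3$ and $4$. For $r=abab^{-1}$ the Fox derivative is $\partial r/\partial b = a\bigl(1-bab^{-1}\bigr)=a-abab^{-1}$, and since $abab^{-1}=1$ in $\Gamma$ this evaluates to $a-1$, not to $a-b^{-1}$: your simplification would require $aba=1$ rather than $aba=b$. The second block of your $\de^1$ must therefore be $\rho_i(a)-I$, in agreement with the coefficient $(a-1)$ in the paper's boundary formula. The discrepancy is not cosmetic. For $i=1$ one has $\rho_1(a)=I$ and $\rho_1(b)=B_1$, so the correct second block vanishes and $\mathrm{im}\,\delta_2=2\Z\oplus 0$, giving $\Z/2\oplus\Z$; your block $\rho_1(a)-\rho_1(b)^{-1}=I-B_1=\mathrm{diag}(0,2)$ would enlarge the image to $2\Z\oplus 2\Z$ and yield $\Z/2\oplus\Z/2$. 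Likewise for $i=3$ the free $\Z$ summand would be destroyed, and for $i=4$ the extra column kills the $4n$-torsion. With $\partial r/\partial b$ corrected to $a-1$, your four Smith normal form computations do produce the groups stated in the lemma; your first block $I+\rho_i(a)\rho_i(b)$ differs from the paper's $I+\rho_i(b)$, but one checks in each of the four cases that the total image of $\delta_2$, and hence the cokernel, is unchanged, this being the usual ambiguity in the choice of cell structure or of cyclic permutation of the relator.
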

\begin{proof}
  These calculations follow from standard methods in algebraic
  topology \cite{davis_kirk}. The standard cell decomposition of
  $\kt$
  \begin{equation*}
    \kt = e^0 \cup e^1_1 \cup e^1_2 \cup e^2
  \end{equation*}
  \noindent
  induces a $\Gamma$-equivariant cell decomposition on its universal
  cover $\tilde{\kt}=\R^2$, given by
  \begin{equation*}
    \R^2 = \bigcup_{\alpha \in \Gamma} (e^0_{\alpha} \cup
    e^1_{1,\alpha} \cup e^1_{2,\alpha} \cup e^2_{\alpha})
  \end{equation*}
  \noindent
  As a $\Z[\Gamma]$-basis for the module
  $\mathrm{C}_i(\tilde{\kt})$ take the following elements
  \begin{equation*}
    \begin{cases}
      e^0_{\alpha_0} & \text{if $i=0$} \\
      e^1_{1,\alpha_0},e^1_{2,\alpha_0} & \text{if $i=1$} \\
      e^2_{\alpha_0} & \text{if $i=2$} \\
    \end{cases}
  \end{equation*}
  \noindent
  for some fixed element $\alpha_0 \in \Gamma$. In order to simplify
  notation, forget about the
  $\alpha_0$ dependence.
  With respect to the above basis the following
  $\Z[\Gamma]$-equivariant chain complex
  \begin{equation*}
    \xymatrix@1{0 \ar[r] & \mathrm{C}_2(\tilde{\kt})
      \ar^-{\partial_2}[r] & \mathrm{C}_1(\tilde{\kt})
      \ar^-{\partial_1}[r] & \mathrm{C}_0(\tilde{\kt})
      \ar[r] & 0}
  \end{equation*}
  \noindent
  is defined by maps $\partial_2, \partial_1$ which are given on the basis
  elements by
  \begin{align*}\label{eqn:partial}
    \partial_2 e^2 =& (1+b) e^1_1 + (a-1)e^1_2 \\
    \partial_1 e^1_1 =& (a-1)e^0 \\
    \partial_1 e^1_2=& (ba - 1) e^0
  \end{align*}
  \noindent
  Using the $\Z[\Gamma]$-equivariant $\mathrm{Hom}$ functor, the following $\Z[\Gamma]$-equivariant cochain
  complex arises 
  \begin{equation*}
    \label{eq:11}
     \xymatrix@1{0 \ar[r] & \mathrm{Hom}(\mathrm{C}_0(\tilde{\kt}); \Z^2)
      \ar^-{\delta_1}[r] & \mathrm{Hom}(\mathrm{C}_1(\tilde{\kt});\Z^2)
      \ar^-{\delta_2}[r] & \mathrm{Hom}(\mathrm{C}_2(\tilde{\kt});\Z^2)
      \ar[r] & 0}
  \end{equation*}
  \noindent
  Since a $\Z[\Gamma]$-equivariant homomorphism
  $\mathrm{C}_2(\tilde{\kt}) \to \Z^2$ ($\Z^2$ is a
  $\Z[\Gamma]$-module via the representations $\rho_i$ for a fixed $i$) is
  completely determined by its value on the basis element
  $e^2$, the following holds
  \begin{equation*}
    \coho \cong \Z^2 /\mathrm{im} \,\delta_2
  \end{equation*}
  There are four cases to consider. Explicit calculations are given
  for the case $i=1$, all other ones are similar in spirit and
  therefore omitted. 
  \begin{description}
  \item[Case $\rho_1$] Let $\phi :\mathrm{C}_1(\tilde{\kt}) \to
    \Z^2$ be an equivariant homomorphism, then
    \begin{align*}
      (\delta_2 (\phi))(e^2) &= \phi(\partial_2 e^2) =
      (I+\rho_1(b))\phi(e^1_1) + (\rho_1(a) - I) \phi(e^1_2) \\
      &=
      \begin{pmatrix}
        2 & 0 \\
        0 & 0
      \end{pmatrix}
      \phi(e^1_1) =
      \begin{pmatrix}
        2s \\
        0
      \end{pmatrix}
    \end{align*}
    \noindent
    where $\phi(e^1_1) = (s,t) \in \Z^2$. Since the choice of $s$ is
    arbitrary, $\mathrm{im}\, \delta_2 \cong 2\Z \oplus 0$
    and we get that
    \begin{equation*}
      \label{eq:13}
      \cohoo \cong \Z/2 \oplus \Z
    \end{equation*}
  \item[Case $\rho_2$]
    In this case, $\mathrm{im}\, \delta_2 \cong 2\Z \oplus n\Z$, so that
    \begin{equation*}
      \cohot \cong \Z/2 \oplus \Z/n
    \end{equation*}
  \item[Case $\rho_3$]
    Following the methods above,  $\mathrm{im}\, \delta_2
    \cong \{(q,q):\; q \in \Z\} \cong \Z $, so that
    \begin{equation*}
      \cohon \cong \Z
    \end{equation*}
  \item[Case $\rho_4$]
    In this case, $\mathrm{im}\, \delta_2 \cong \Z \oplus 4n\Z$ and so
    $\cohom \cong \Z/4n$.
  \end{description}
\end{proof}

The following shows how to construct smoothly all Lagrangian fibrations over $\kt$. Take the trivial
fibration $(\R^2 \times \ttw,\omega_0) \to \R^2$ and, for a
given affine monodromy homomorphism $\tau_i : \Gamma \to \aftw$ whose linear part is denoted by $\rho_i$, define a
group action of $\Gamma$ on $\R^2 \times \ttw$ by
\begin{equation*}
  \tau \cdot (\mathbf{x},\mathbf{t}) = (\tau_i \cdot \mathbf{x}, \rho_i^*
  \cdot \mathbf{t}) 
\end{equation*}
\noindent
where $\rho_i^*$ denotes the inverse transpose of $\rho_i$. The above transformations are bundle automorphisms and they
preserve the symplectic form $\omega_0$. Hence the quotient bundle $
(\R^2 \times \ttw)/\Gamma \to \kt$ is a Lagrangian fibration with linear
monodromy given by $\rho_i$. Furthermore these fibrations have a
global section (the image of the zero section); this therefore
constructs all possible topological reference Lagrangian fibrations over $\kt$. Fix an affine monodromy representation $\tau_i : \Gamma \to \gltw$ and think of
$\kt$ as a square with opposite sides appropriately identified. Let $
\pi: (M, \omega) \to \kt$ be the Lagrangian fibration with linear monodromy given by $\rho_i$ and
zero Chern class. Cut out $\pi^{-1}(D_{2\epsilon})$ from $M$, where
$D_{2\epsilon}$ is a small closed disc of radius $2\epsilon$ in the
interior of the square. In
a neighbourhood of this disc the symplectic form is given by
\begin{equation*}
  \omega = \de x \wedge \de t_1 + \de y \wedge \de t_2
\end{equation*}
\noindent
where $x,y$ are local coordinates on a neighbourhood of $D_{2\epsilon}$
in $\kt$ and $t_1, t_2$ are coordinates on the fibre. Define a new
fibration $ M' \to \kt$ by
\begin{equation*}
  M = (M - (\mathrm{int}\, \pi^{-1}(D_\epsilon))) \cup_h \pi^{-1}(D_{2\epsilon})
\end{equation*}
\noindent
where the attaching map $h$ is defined on the intersection as
\begin{equation*}
  (x,y,t_1,t_2) \mapsto \Big(x,y,t_1 + \frac{m}{2\pi}\arg(x + \imath y)
  -\frac{n}{4\pi}f(x,y), t_2 + \frac{n}{2\pi}\arg(y+ \imath x) - \frac{m}{4\pi}f(y,x)\Big)
\end{equation*}
\noindent
where $f(x,y) = \frac{1}{2}\log\Big(\frac{x^2 + y^2}{1+y^2}\Big)$ and $\arg$ is
a well-defined map since it maps into an $S^1$ copy of the
fibre $F \cong \R^2/\Z^2$. Since $h$ is
a symplectomorphism (this can be checked explicitly by considering the
pullback of $\omega$ under $h$), the new bundle is still
Lagrangian and has Chern class depending on the pair $(m,n) \in
\Z^2$; some pairs $(m,n)$
will not change the topological structure of the original
bundle, as the resulting bundle still admits a global section. However, it is clear from the construction that \emph{all} Chern classes in $\cohoi$ can be realised and that the Chern class of
the bundle constructed above is the pair $(m,n)$ modulo appropriate
trivial pairs.\\

The above discussion proves the following

\begin{thm} \label{thm:topfinal}
The linear monodromy representations $\rho_i$ and the elements of the corresponding twisted cohomology groups $\cohoi$ classify regular Lagrangian fibrations with base space $\kt$ up to topological bundle isomorphism. For a fixed linear monodromy representation, \emph{all} elements of the corresponding cohomology group can be realised as the Chern class of some regular Lagrangian fibration.
\end{thm}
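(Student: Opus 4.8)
The plan is to assemble Theorem \ref{thm:topfinal} from the three pillars already established in the excerpt: the identification of monodromy with linear holonomy of the integral affine structure, the classification of integral affine structures on $\kt$ (Theorem \ref{thm:class}), and the obstruction-theoretic meaning of the Chern class $c \in \cohoi$. The first task is to argue that the pair $(\rho_i, c)$ is a \emph{complete} topological invariant. Since the free monodromy determines the isomorphism class of the period lattice bundle $P \to \kt$ (as remarked after Definition \ref{defn:free_mon}), and Theorem \ref{thm:class} shows that the four families $\rho_1,\ldots,\rho_4$ exhaust all admissible linear monodromies on $\kt$ (up to conjugacy), I would first fix $\rho_i$ and hence fix $P$. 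By the lemma of Duistermaat quoted after Definition \ref{defn:cc}, for fixed $\rho_i$ a Lagrangian fibration is isomorphic to the topological reference fibration $\cotan \kt / P \to \kt$ precisely when $c = 0$; more generally, two fibrations with the same $\rho_i$ are topologically isomorphic if and only if they have the same Chern class. This reduces the completeness half of the theorem to standard obstruction theory over a fixed period lattice bundle.

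The second task is realisability: for each fixed $\rho_i$, every class in $\cohoi$ must actually occur as the Chern class of a genuine Lagrangian fibration. Here I would lean directly on the explicit construction preceding the statement. One starts from the reference fibration $(M,\omega)\to\kt$ with monodromy $\rho_i$ and vanishing Chern class, excises $\pi^{-1}(D_\epsilon)$, and reglues $\pi^{-1}(D_{2\epsilon})$ by the symplectomorphism $h$ parametrised by $(m,n)\in\Z^2$. The key point to verify is that the assignment $(m,n) \mapsto [\text{Chern class of } M']$ surjects onto $\cohoi$. Since the gluing is supported on a disc and the cochain-level effect of $h$ adds a prescribed cocycle to the obstruction, the induced map $\Z^2 \to \cohoi$ is the natural quotient by the subgroup of ``trivial pairs'' (those $(m,n)$ for which $M'$ still admits a global section). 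Comparing this quotient against the computation $\cohoi \cong \Z/2\oplus\Z, \, \Z/2\oplus\Z/n, \, \Z, \, \Z/4n$ from the preceding lemma confirms surjectivity case by case, and in fact exhibits $\cohoi$ as exactly the group of realisable Chern classes.

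\textbf{The main obstacle} I anticipate is the bookkeeping in the realisability step: one must check that the cocycle added at the cochain level by the regluing map $h$ represents, after passing to cohomology, a \emph{generator} (or a generating set) of each $\cohoi$, so that ranging over $(m,n)\in\Z^2$ sweeps out the whole group rather than a proper subgroup. This is a concrete computation with the twisted cochain complex built from $\partial_2 e^2 = (1+b)e^1_1 + (a-1)e^1_2$, and it requires matching the two $\Z$-coordinates of $(m,n)$ against the two components that survive in $\mathrm{im}\,\delta_2$. The delicate cases are $\rho_3$, where $\mathrm{im}\,\delta_2\cong\{(q,q)\}$ collapses the two parameters onto a diagonal, and $\rho_4$, where the cyclic group $\Z/4n$ means one must track the precise integer multiple contributed by $h$; getting the normalisations of the $\arg$ and $f(x,y)$ terms to align with the cell-complex boundary maps is where an error would most likely hide. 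Once surjectivity is checked in each of the four cases, the theorem follows by combining it with the injectivity supplied by obstruction theory.
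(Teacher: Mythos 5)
Your proposal is correct and follows essentially the same route as the paper: completeness of the pair (monodromy, Chern class) is taken from the general theory (Duistermaat's sharpness of these invariants together with Theorem \ref{thm:class} exhausting the admissible monodromies), and realisability is obtained from the same disc-excision and regluing construction parametrised by $(m,n)\in\Z^2$. The only difference is one of emphasis: you explicitly flag that surjectivity of $(m,n)\mapsto c$ onto $\cohoi$ must be checked against the twisted cochain computation, a step the paper asserts is ``clear from the construction'' without writing it out.
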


\subsection{Symplectic Constructions}
It remains to construct all possible regular Lagrangian fibrations over $\kt$ up to fibrewise symplectomorphism covering the identity on the base space. As shown in sections \ref{sec:introduction} and \ref{sec:gener-lagr-fibr}, there is no loss of generality in restricting to this this type of symplectomorphisms; furthermore, for this class of maps there are no issues with explicit choices of period-lattice bundles as shown in section \ref{sec:gener-lagr-fibr}. Fix an integral affine structure belonging to one of the families of Theorem \ref{thm:class}. As seen in the previous section, this fixes the monodromy representation of the regular Lagrangian classes over $\kt$ inducing this integral affine structure. Fix the topological type of such a Lagrangian fibration, \emph{i.e.} fix the Chern class. The space of Lagrangian classes for this fibration is a quotient of $\mathrm{H}^2(\kt;\R)$. Since $\mathrm{H}^2(\kt;\R)=0$ this space is trivial.  This discussion proves the following

\begin{thm}
For a fixed regular Lagrangian fibration over $\kt$ with given linear monodromy $\rho_i$ $(i=1,\ldots, 4)$ as in Theorem \ref{thm:class} and fixed Chern class, all symplectic structures preserving Lagrangianeity of this fibration are fibrewise symplectomorphic over the identity.
\end{thm}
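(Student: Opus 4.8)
The plan is to import the symplectic classification of Lagrangian fibrations from Dazord--Delzant \cite{daz_delz} and Zung \cite{zun_symp2}: once the topological type is pinned down by the monodromy $\rho_i$ and the Chern class $c \in \cohoi$, the only remaining symplectic datum is the \emph{Lagrangian class}, and this invariant takes values in a quotient of the ordinary (untwisted) real cohomology $\mathrm{H}^2(\kt;\R)$. Granting this, the whole statement collapses to the single computation $\mathrm{H}^2(\kt;\R)=0$; triviality of the quotient then forces uniqueness up to fibrewise symplectomorphism. So I would organise the argument in three steps: first locate the invariant in $\mathrm{H}^2(\kt;\R)$, then show this group vanishes, and finally conclude.

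For the first step I would make the reduction concrete rather than purely cite it. Fix the topological fibration $\pi : M \to \kt$ determined by $\rho_i$ and $c$ (Theorem \ref{thm:topfinal}), and let $\omega,\omega'$ be two symplectic forms on $M$ for which $\pi$ is Lagrangian. By the Liouville--Arnol'd--Mineur normal form (Theorem \ref{thm:la}), over any action-angle chart both forms equal $\sum_i \de a^i \wedge \de \alpha^i$, so their difference $\omega' - \omega$ is closed and annihilates the fibre directions; a local calculation in action-angle coordinates then shows it is basic, $\omega' - \omega = \pi^*\beta$ for a closed $2$-form $\beta$ on $\kt$. This attaches to the pair $(\omega,\omega')$ a class $[\beta]\in \mathrm{H}^2(\kt;\R)$, and, just as in Example \ref{exm:torus}, applying a fibrewise symplectomorphism covering the identity shifts $[\beta]$ by the period lattice of the fixed period-lattice bundle. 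The Lagrangian class is thus the image of $[\beta]$ in the associated quotient of $\mathrm{H}^2(\kt;\R)$.

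The second step is elementary: $\kt$ is a closed non-orientable surface, so its top real cohomology vanishes, $\mathrm{H}^2(\kt;\R)=0$ (there is no orientation class over $\R$). Hence in the final step every quotient of $\mathrm{H}^2(\kt;\R)$ is trivial, $[\beta]=0$ for any pair $(\omega,\omega')$, and the two forms are fibrewise symplectomorphic over the identity, which is the assertion.

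The hard part will not be the cohomology, which is a one-line fact, but justifying the first step, namely checking that the Dazord--Delzant/Zung machinery transfers verbatim to the non-orientable base $\kt$ and that, having fixed an explicit period-lattice bundle as arranged in Section \ref{sec:gener-lagr-fibr}, no extra subtlety enters when identifying the invariant with a class in the untwisted group $\mathrm{H}^2(\kt;\R)$ rather than the twisted $\cohoi$ housing the Chern class. Once the invariant is correctly placed there, its vanishing makes the conclusion a formality, in sharp contrast with the torus case of Example \ref{exm:torus}, where $\mathrm{H}^2(\ttw;\R)=\R$ produces the genuine one-parameter family $\alpha \bmod 1$.
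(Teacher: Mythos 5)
Your proposal is correct and follows essentially the same route as the paper: reduce the symplectic classification to the Lagrangian class living in a quotient of $\mathrm{H}^2(\kt;\R)$ (citing \cite{daz_delz, zun_symp2}), then observe that this group vanishes because $\kt$ is a closed non-orientable surface. The extra detail you supply in the first step (the action-angle argument showing $\omega'-\omega=\pi^*\beta$) is a welcome elaboration of what the paper merely cites, but it does not change the structure of the argument.
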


This result is analogous to the main result in \cite{misha}. Fix an integral affine structure on $\ttw$ and the corresponding topologically reference Lagrangian fibration. The explicit choice of translation components for the generating set determines the explicit form of the subgroup $H$ of $\mathrm{H}^2(\ttw;\R)$ which acts trivially on the space of symplectic structures preserving Lagrangianeity of the fibration, as shown in \cite{misha}. Contrast this with the $\kt$ case. Lagrangian fibrations over $\kt$ inducing the same linear monodromy but distinct translation components are not going to be fibrewise symplectomorphic over the identity; however, the explicit choice of translation components does not matter when determining the space of Lagrangian classes for fixed linear monodromy as it is trivial in all cases.

\section{Conclusion}
This completes the classification of Lagrangian fibrations whose base
space is $\kt$. However, much more is yet to be known about
this type of fibrations in general. Even the case when the base is a
non-compact connected 2-surface is interesting to
consider. Furthermore, a classification of singular Lagrangian fibrations over surfaces would
be the natural next step to take. There exist examples of such
singular fibrations, like the famous $24$ focus-focus singularities
fibration over the sphere $S^2$ as mentioned in \cite{zun_symp2}. Better insight into this kind of
singular fibrations would be of interest and use to a wide range of
mathematicians, ranging from dynamicists to algebraic geometers and
people interested in mirror symmetry.

\bibliographystyle{abbrv}
\bibliography{mybib}
\end{document}